\def\qed@warning{}
\numberwithin{equation}{section}
\newlist{thmenum}{enumerate}{1}
\setlist[thmenum]{label=(\alph*)}
\newcommand{\nr}{N}
\newcommand{\set}[1]{\left\{ \def\given{\ \middle| \ }  #1 \right\}  }
\newcommand{\defeq}{\mathrel{:=}}
\NewDocumentCommand{\nset}{m}{\left[#1\right]}
\renewcommand{\Re}{\operatorname{Re}}
\renewcommand{\Im}{\operatorname{Im}}
\NewDocumentCommand{\dif}{}{\,\mathrm{d}}
\NewDocumentCommand{\bigoh}{}{\mathrm{O}}
\DeclareExpandableDocumentCommand{\IfEmptyTF}{mmm}
 {
    \tl_if_empty:nTF {#1} {#2} {#3}
 }
\def\leftfun#1{\mathopen{}\left#1}
\def\rightfun#1{\right#1}
\NewDocumentCommand{\ParenIfNonempty}{m}{%
  \IfEmptyTF{#1}{%
  }{%
    \leftfun(#1\rightfun)%
  }
}
\NewDocumentCommand{\ii}{}{\mathsf{i}}
\NewDocumentCommand{\tu}{}{2 \pi \ii}
\NewDocumentCommand{\evec}{m}{\mathbf{e}_{#1}}
\NewDocumentCommand{\slg}{}{\operatorname{SL}_2(\mathbb{C})}
\NewDocumentCommand{\pjsp}{O{\mathbb{C}}}{\operatorname{#1 P}^{1}}
\DeclareMathOperator{\hyp}{hyp}
\NewDocumentCommand{\qn}{}{\triangleleft}
\NewDocumentCommand{\qninv}{}{\mathbin{\overline{\triangleleft}}}
\NewDocumentCommand{\qgrplong}{O{\xi}}{\mathcal{U}_{#1}(\mathfrak{sl}_2)}
\NewDocumentCommand{\qgrp}{O{\xi}}{\mathcal{U}_{#1}}
\NewDocumentCommand{\weyl}{O{\xi}}{\mathcal{W}_{#1}}
\NewDocumentCommand{\mer}{}{\mathbf{m}}
\NewDocumentCommand{\comp}{m}{S^3 \setminus #1}
\NewDocumentCommand{\reid}{m O{}}{
  \bool_case:nTF
  {
    {\str_if_eq_p:NN {#1} {1} } { \operatorname{R}\sb{1} }
    {\str_if_eq_p:NN {#1} {1f} } { \operatorname{R}\sb{1}^{\operatorname{fr}} }
    {\str_if_eq_p:NN {#1} {2} } { \operatorname{R}\sb{2}^{#2} }
    {\str_if_eq_p:NN {#1} {2a} } { \operatorname{R}\sb{2}^{\operatorname{a}} }
    {\str_if_eq_p:NN {#1} {2b} } { \operatorname{R}\sb{2}^{\operatorname{b}} }
    {\str_if_eq_p:NN {#1} {3} } { \operatorname{R}\sb{3} }
  }
  { } 
  { } 
}
\NewDocumentCommand{\csbun}{O{}}{\mathcal{E}^{#1}}
\NewDocumentCommand{\qbun}{O{\nr} O{}}{\mathcal{E}_{#1}^{#2}}
\NewDocumentCommand{\qbunbig}{O{\nr} O{}}{\tilde{\mathcal{E}}_{#1}^{#2}}
\NewDocumentCommand{\qfunc}{O{\nr} m}{\mathcal{Z}_{#1}^{\psi}\ParenIfNonempty{#2}}
\NewDocumentCommand{\qinv}{O{\nr} m}{\mathrm{Z}_{#1}^{\psi}\ParenIfNonempty{#2}}
\NewDocumentCommand{\actq}{O{\nr} m m}{%
  \mathcal{S}_{#1}\leftfun(#2
  \IfEmptyTF{#3}{%
    \rightfun)
  }{%
    \middle|#3\rightfun)%
  }
}
\NewDocumentCommand{\act}{ m m}{%
  \mathcal{S}\leftfun(#1
  \IfEmptyTF{#2}{%
    \rightfun)
  }{%
    \middle|#2\rightfun)%
  }
}
\NewDocumentCommand{\stint}{O{\nr} m m}{%
  J_{#1}\leftfun(#2
  \IfEmptyTF{#3}{%
    \rightfun)
  }{%
    \middle|#3\rightfun)%
  }
}
\NewDocumentCommand{\stintadj}{O{\nr} m m}{%
  \widetilde{J}_{#1}\leftfun(#2
  \IfEmptyTF{#3}{%
    \rightfun)
  }{%
    \middle|#3\rightfun)%
  }
}
\NewDocumentCommand{\crossingfunction}{O{\nr} m m m}{
  \Omega_{#1}^{#2}\leftfun(#3
  \IfEmptyTF{#4}{%
    \rightfun)
  }{%
    \middle|#4\rightfun)%
  }
}
\NewDocumentCommand{\crossingfunctionclass}{m m}{
  \Omega^{#1}
  \IfEmptyTF{#2}{%
  }{%
    \leftfun(#2\rightfun)%
  }
}
\NewDocumentCommand{\lkfunc}{O{\nr} m}{\mathcal{L}_{#1}\ParenIfNonempty{#2}}
\NewDocumentCommand{\lklog}{O{} m}{\psi_{#1}\ParenIfNonempty{#2}}
\NewDocumentCommand{\qcat}{O{\nr}}{\mathsf{C}_{#1}}
\NewDocumentCommand{\tcat}{O{\nr}}{\mathsf{T}_{#1}}
\NewDocumentCommand{\pf}{O{} m }{\varphi_{#1}\ParenIfNonempty{#2}}
\NewDocumentCommand{\pfl}{O{\nr} m}{\phi_{#1}\ParenIfNonempty{#2}}
\NewDocumentCommand{\qlfl}{O{\nr} m m}{%
  \lambda_{#1}\leftfun(#2
  \IfEmptyTF{#3}{%
    \rightfun)
  }{%
    \middle|#3\rightfun)%
  }
}
\NewDocumentCommand{\dll}{m }{\operatorname{L}\ParenIfNonempty{#1}}
\NewDocumentCommand{\lN}{}{\mathrm{N}}
\NewDocumentCommand{\lW}{}{\mathrm{W}}
\NewDocumentCommand{\lS}{}{\mathrm{S}}
\NewDocumentCommand{\lE}{}{\mathrm{E}}
\NewDocumentCommand{\up}{}{\uparrow}
\newcommand{\br}{to [out=00,in=180]}
\declaretheorem[style=bigtheorem,title=Theorem,refname={Theorem,
Theorems}]{bigtheorem}
\declaretheorem[style=bigtheorem,title=Conjecture,refname={Conjecture,
Conjectures}]{conjecture}
\declaretheorem[style=theorem]{proposition}
\declaretheorem[style=theorem,sibling=proposition]{theorem}
\declaretheorem[style=theorem,sibling=proposition]{lemma}
\declaretheorem[style=definition,sibling=proposition]{definition}
\declaretheorem[style=definition,sibling=proposition]{remark}
\declaretheorem[style=definition,sibling=proposition]{example}
\declaretheorem[style=theorem, parent=]{step}
\crefname{equation}{equation}{equations}
\Crefname{equation}{Equation}{Equations}
\crefname{bigtheorem}{Theorem}{Theorems}
\Crefname{bigtheorem}{Theorem}{Theorems}
\crefname{step}{Step}{Steps}
\Crefname{step}{Step}{Steps}
\crefname{conjecture}{Conjecture}{Conjectures}
\Crefname{conjecture}{Conjecture}{Conjectures}
\title{State integrals for the quantized \(\slg\) Chern-Simons invariant}
\author{Calvin McPhail-Snyder}
\address{Department of Mathematics, Duke University}
\email{calvin@sl2.site}
\begin{document}

\begin{abstract}
  Previous work of the author and N.\ Reshetikhin defines an
  invariant \(\operatorname{Z}_{N}^{\psi}(K, \rho, \mu)\) of a knot
  \(K\), a representation \(\rho \colon \pi_{1}(S^{3} \setminus K) \to
  \operatorname{SL}_{2}(\mathbb{C})\), and a logarithm \(\mu\) of a
  meridian eigenvalue of \(\rho\).
  It can be interpreted as a geometric twist of the Kashaev invariant
  or as a quantization of the \(\operatorname{SL}_{2}(\mathbb{C})\)
  Chern-Simons invariant and is defined via a discrete state-sum
  involving quantum dilogarithms.
  In this paper we show how to express
  \(\operatorname{Z}_{N}^{\psi}(K, \rho, \mu)\) as a sum over contour
  integrals in a space parametrizing hyperbolic structures on the
  knot complement.
  Such state integral presentations are an important step in
  determining the asymptotics of quantum invariants as predicted by
  the Volume Conjecture.
  We discuss this perspective and the remaining obstacles to
  establishing exponential growth of \(\operatorname{Z}_{N}^{\psi}\).
\end{abstract}

\subjclass{Primary 57K16, secondary 57K32, 58J28}
\keywords{complex Chern-Simons invariant, hyperbolic volume, quantum
hyperbolic invariants, quantum dilogarithm, volume conjecture, state integral}

\maketitle


\section{Introduction}%
\label{sec:Introduction}

\subsection{Background and motivation}%
\label{sec:Background and motivation}

Kashaev's \defemph{Volume Conjecture} \cite{Kashaev1997} proposes a
striking connection between the geometry of hyperbolic knots and
their quantum invariants.
This conjecture appears to be related to quantum \(\slg\)
Chern-Simons theory, a TQFT arising as a complexification
\cite{Gukov2005} or analytic continuation \cite{Witten2010} of the
more familiar quantum \(\operatorname{SU}(2)\) Chern-Simons theory
\cite{zbMATH04092352}.
A number of authors
\cite{Baseilhac2004,Bonahon2021,Kashaev2005,McPhailSnyderVolume} have
constructed geometric quantum invariants that can be interpreted as
candidates for a mathematical realization of \(\slg\) Chern-Simons theory.

We discuss some typical features of a candidate mathematical realization.%
\note{
  In the terminology of \cref{sec:negative type} these are cyclic
  type invariants.
  There are also related but inequivalent principal type invariants.
}
For concreteness let \(K\) be a hyperbolic knot in \(S^{3}\),
although some versions are additionally or instead defined for closed
\(3\)-manifolds or related objects.
A realization is a family of functions
\[
  \mathcal{Z}_{\nr} \colon \mathsf{X}(K) \to \mathbb{C}
\]
for \(\nr\) a positive integer related to the level and
\(\mathsf{X}(K)\) a space parametrizing hyperbolic structures on
\(\comp{K}\); it is usually a version of the \(\slg\) character
variety of \(\comp{K}\).
Let \(\varsigma_{K} \in \mathbb{C}/ 2 \pi \ii \mathbb{Z}\) be the
\(\slg\) Chern-Simons invariant of the canonical structure
\(\rho_{\hyp}\) of \(K\), normalized so \(2 \pi \Re \varsigma_{K}\)
is the hyperbolic volume of \(K\).
A volume conjecture is the statement that (up to normalization and
some other details)
\begin{equation}
  \label{eq:asymptotics sign positive}
  \lim_{\nr \to \infty}
  \frac{
    \log \mathcal{Z}_{\nr}(K, \rho)
  }{
    \nr
  }
  =
  \varsigma_{K}
\end{equation}
for some \(\rho \in \mathsf{X}(K)\) that is not necessarily \(\rho_{\hyp}\).
We think of \(\nr \to \infty\) as the semiclassical limit.
The surprising observation is that this holds for \(\rho\) other than
\(\rho_{\hyp}\); Kashaev's original conjecture \cite{Kashaev1997} is
that it happens for the trivial hyperbolic structure \(1\).
In this case \(\mathcal{Z}_{\nr}(K,1)\) can be identified with a
value of the \(\nr\)th colored Jones polynomial of \(K\) \cite{Murakami2001}.
There is a standard ansatz (successful in some special cases) for
proving these conjectures that uses two steps.

\begin{step}[Establish integrals]
  \label{step 1}
  Show that the invariant is equivalent (perhaps asymptotically) to a
  \defemph{state integral}, a contour integral
  \[
    \mathcal{Z}_{\nr}(K, \rho) \sim
    \int_{\Gamma} e^{\nr \mathcal{S}(\mathbf{t})} \dif \mathbf{t}
  \]
  where \(\mathcal{S}\) is a holomorphic function that can be
  identified with the Chern-Simons functional.
  Sometimes instead one must consider a sum of such integrals.
\end{step}

In more detail, given a combinatorial description \(\mathcal{T}\) of
\(\comp{K}\) the domain of \(\mathcal{S}\) is a space
\(\Omega_{\mathcal{T}}\) parametrizing hyperbolic structures on the
individual pieces of \(\mathcal{T}\).
Critical points of \(\mathcal{S}\) are choices that combine together
to give a coherent structure on all of \(\comp{K}\) and the critical
value is the \(\slg\) Chern-Simons invariant.
For example, if \(\mathcal{T}\) is an ideal triangulation then
\(\Omega_{\mathcal{T}}\) parametrizes shapes of the tetrahedra and
critical points of \(\mathcal{S}\) are solutions of Thurston's gluing equations.

\begin{step}[Move the contour]
  \label{step 2}
  Show that
  \[
    \int_{\Gamma} e^{\nr \mathcal{S}(\mathbf{t})} \dif \mathbf{t}
    \sim
    \int_{\Gamma'} e^{\nr \mathcal{S}(\mathbf{t})} \dif \mathbf{t}
  \]
  where \(\Gamma'\) is a contour in saddle point position for a
  critical point corresponding to \(\rho_{\hyp}\).
\end{step}

The maximum critical value of \(\Re \mathcal{S}\) is \(\Re \varsigma_{K}\).
It occurs at a saddle point of \(\Re \mathcal{S}\) because
\(\mathcal{S}\) is a holomorphic function.
If \(\Gamma'\) is a contour passing through this saddle point in the
right way a standard \defemph{saddle-point approximation} by a
Gaussian integral gives the desired asymptotics \eqref{eq:asymptotics
sign positive}.

In general both steps require hard computations; a detailed
discussion of the analytic issues is given by
\citeauthor{zbMATH06684922} \cite{zbMATH06684922}.
\cref{step 1} requires re-writing a discrete sum as a (sum of)
integrals, typically by using the Poisson summation formula.
Ensuring that this is valid requires careful estimates on the size of
\(\mathcal{S}\) and related functions.

Previous work of the author \cite{McPhailSnyderVolume} uses the
holonomy \(R\)-matrices defined in joint work with N.\ Reshetikhin
\cite{McPhailSnyderAlgebra} to define a candidate mathematical
realization \(\qinv{}\) of complex Chern-Simons theory as described above.
In this paper we show that \(\qinv{}\) is equal to a sum of state integrals.
Our method does not require checking any inequalities and works for
any representation satisfying a natural, algebraic nondegeneracy condition.
To my knowledge this is the first example of an exact (not
asymptotic) state integral presentation of a cyclic type invariant.
Such integral presentations automatically occur for principal type
invariants, an inequivalent but related mathematical realization of
complex Chern-Simons theory (\cref{sec:negative type}).
Our results may clarify the relationship between these two branches.

\subsection{Our results}%
\label{sec:Our results}

Let \(K\) be an oriented, framed knot in \(S^{3}\) and \(\rho\)  a
decorated representation of \(K\), which is a representation \(\rho \colon
\pi_{1}(\comp{K}) \to \slg\) along with a choice of eigenspace for
the image of the peripheral subgroup under \(\rho\).
In particular this choice distinguishes an eigenvalue \(m\) of the
meridians of \(K\).
A \defemph{log-meridian} is a complex number \(\mu\) with \(e^{\tu \mu} = m\).
The main result of \cite{McPhailSnyderVolume} is the definition of a
sequence of invariants%
\note{
  In \cite{McPhailSnyderVolume} we wrote \(\qinv{}\) as depending on
  a \defemph{log-decoration} \(\mathfrak{s}\), which also includes a
  choice of log-longitude \(\lambda\).
  The normalization \(\qinv{}\) used here is independent of
  \(\lambda\) so we drop it from the notation.
}
\[
  \qinv{K, \rho, \mu} \in \mathbb{C}
\]
for integers  \(\nr \ge 2\).
We can view them as a quantization of the  \(\slg\) Chern-Simons
invariant, as a geometric twist of the Kashaev \cite{Kashaev1995} and
ADO \cite{Akutsu1992} invariants, or as a candidate mathematical
realization of \(\slg\) Chern-Simons theory.
The construction makes sense (and our results do too) for links and
tangles as well but for simplicity we restrict to the case of knots.

Let \(D\) be an open knot diagram of \(K\) (i.e\@ a \(1\)-\(1\)
tangle diagram whose closure is \(K\)).
Let \(E\) be the set of internal segments (edges) of \(D\).
Using quantum dilogarithms we define an \defemph{action} \(\actq{D,
\mu}{} \colon \mathbb{C}^{E} \to \mathbb{C}\) so that
\[
  t \mapsto e^{ \nr \actq{D, \mu}{t} } \colon  \mathbb{C}^{E} \to \mathbb{C}
\]
is a meromorphic function.
Given the choice of some additional gauge data \(u\) called a
\defemph{shadow coloring} the invariant of \cite{McPhailSnyderVolume}
is by definition a state sum
\[
  \qinv{K, \rho, \mu}
  =
  \frac{1}{\nr^{\operatorname{cr}(D)}}
  \sum_{n \in \nset{\nr}^{E}}
  e^{ \nr  \actq{D, \mu}{(\beta + n)/\nr} }
\]
where \(\nset{\nr} = \set{0, 1, \dots, \nr - 1}\) and
\(\operatorname{cr}(D)\) is the crossing number of \(D\).
The vector \(\beta \in \mathbb{C}^{E}\) encodes \((\rho, u)\).
For a vector \(k \in \mathbb{Z}^{E}\) with integer entries%
\note{%
  As discussed in \cref{rem:flattening vector} the choice of
  \(\mathbf{k}\) (along with \(\beta\)) determines a flattening
  of the octahedral decomposition associated to \(D\).
}
a \defemph{state integral} is
\begin{equation}
  \label{eq:state integral intro}
  \stint{D, \mu, \beta}{k}
  \defeq
  \int_{[0,1]^{E}}
  \exp \leftfun[ \nr ( \actq{D, \mu}{\beta/\nr + t} - \tu k \cdot t )
  \rightfun]
  \dif t
\end{equation}
In \cref{sec:State integrals big} we prove (\cref{main result}) that
\(\qinv{}\) is given by the sum over all state integrals:
\begin{equation}
  \label{eq:state integral sum intro}
  \qinv{K, \rho, \mu}
  =
  \nr^{(|E| - 1)/2}
  \sum_{k \in \mathbb{Z}^{E}}
  \stint{D, \mu, \beta}{k}
\end{equation}
The main technical difficulty is ensuring the contour of integration
avoids the poles of \(e^{\nr\actq{D, \mu}{}}\).
We discuss the required algebraic condition in \cref{sec:Gauge
transformations and admissibility}.
Once this is handled using the methods of
\cite{McphailSnyder2024octahedralcoordinateswirtingerpresentation}
re-writing the sum in terms of the integrals is an elementary
application of Fourier series.
In particular the convergence of the infinite sum is automatic.

As \(\nr \to \infty\) the functions \(\actq{D , \mu}{}\) to a
function \(\act{D}{}\) that we interpret as the Chern-Simons action.
In the literature \(\act{D}{}\) is frequently called the
\defemph{(Yokota) potential function} \cite{arXiv:math/0009165} and
denoted \(V\).
The critical points of \(\actq{D}{}\) are boundary-parabolic
representations of \(K\) and the critical values are their
Chern-Simons invariants.
As such \cref{main result} accomplishes \cref{step 1} towards proving
a volume conjecture for \(\qinv{}\).
We state this precisely, explain how the sum over \(\mathbb{Z}^{E}\)
is natural, and discuss barriers to \cref{step 2} in
\cref{sec:Formal asymptotics}.

\subsection{Cyclic and principal type invariants}
\label{sec:negative type}

There appear to be two inequivalent (but perhaps both natural)
branches of complex Chern-Simons theory.
Above we described \defemph{cyclic type} invariants
\cite{Baseilhac2004,Bonahon2021,Kashaev2005,McPhailSnyderVolume} that
\begin{itemize}
  \item are defined as a sum over a finite state space,
  \item are conjectured to exponentially grow as \(\nr \to \infty\), and
  \item are well-defined for any (or any geometrically
    non-degenerate) hyperbolic structure.
\end{itemize}
In contrast, \defemph{principal type} invariants
\cite{zbMATH06324352,arXiv:1305.4291,arXiv:2508.05120}
\begin{itemize}
  \item are defined using integrals over a noncompact, continuous state space,
  \item are conjectured to \emph{decay} as \(\nr \to \infty\), but
  \item are only defined when the hyperbolic structure satisfies a
    positivity condition so that the integrals converge.
\end{itemize}
These two branches are related to different quantizations of the moduli space of
\(\slg\) local systems.
Principal type invariants are related to the representations of
quantum cluster varieties defined by \citeauthor{zbMATH01607168}
\cite{zbMATH01607168} and \citeauthor{zbMATH05530519}
\cite{zbMATH05530519}.
These are infinite-dimensional representations analogous to the
principal series representations of \(\operatorname{SL}_{2}\).
The cyclic quantizations studied in detail by
\citeauthor{arXiv:2501.02316} \cite{arXiv:2501.02316} are
finite-dimensional and analogous to highest-weight modules.
Cyclic type invariants are more directly related to the Kashaev
invariant and the original Volume Conjecture.
It is typically easier to prove volume conjectures for principal type
invariants:
they are \emph{defined} in terms of integrals, so \cref{step 1} holds
automatically, subject to the positivity condition.

Asymptotic growth of cyclic type invariants has been established
only for knots with up to \(7\) crossings \cite{zbMATH06684922,
Ohtsuki2017, Ohtsuki2018} and for twist knots \cite{Chen2023II}.
For principal type invariants much more is known:
the invariant of \textcite{arXiv:2508.05120} satisfies the conjecture
for every hyperbolic manifold with totally geodesic boundary,
and the Andersen-Kashaev Teichm{\"u}ller TQFT satisfies it for twist
knots \cite{zbMATH07761477} and many other manifolds
\cite{arXiv:2410.10776, arXiv:2512.17437, arXiv:2512.23198}.
We hope that our result leads to a proof of asymptotic growth of
\(\qinv{}\) using similar techniques.

In addition, our work may lead to a better understanding of the
relationship between cyclic and principal invariants.
\textcite{arXiv:1305.4291} showed that their principal type TQFT can
be defined using integrals over compact state spaces.
This form of the partition function formally resembles the sum of
state integrals \eqref{eq:state integral sum intro} after a Wick
rotation and changing the overall sign of the action.
Their reformulation is accomplished via a Weil-Gel'fand-Zak transform
taking values in a space of quasi-periodic functions.
Similar quasi-periodic functions are used in the definition of
\(\qinv{}\) \cite{McPhailSnyderVolume} and this may lead to a more
precise relationship between the two constructions.

\subsection*{Acknowledgements}
I would like to thank Francis Bonahon, Effie Kalfagianni, Lillian
Pierce, and Ka Ho Wong for helpful conversations.

\section{Preliminaries}%
\label{sec:Preliminaries}

Here we give some preliminary facts about \(\slg\) representations of
link complements and octahedral decompositions.
For details see
\cite{McphailSnyder2022hyperbolicstructureslinkcomplements} and
\cite{McphailSnyder2024octahedralcoordinateswirtingerpresentation}.
\subsection{Diagrams and colorings}%
\label{sub:Diagrams}

\begin{definition}
  A \defemph{decorated representation} of a knot \(K\) is a
  representation \(\rho \colon \pi_{1}(\comp{K}) \to \slg\) and a choice
  of eigenspace for the image of each peripheral subgroup of \(K\)
  under \(\rho\).
\end{definition}

More formally, given a choice of peripheral subgroup \(\Pi \subset
\pi_{1}(\comp{K})\) a decoration is an eigenspace \(L \subset
\mathbb{C}^{2}\) of \(\rho(\Pi)\), which we take to be a right eigenspace.
This does not depend on the choice of peripheral subgroup: any other
choice is a conjugate \(y^{-1} \Pi y\) and the corresponding
eigenspace is \(L \rho(y)\).
An orientation of \(K\) determines a meridian \(\mer \in \Pi\) and
the decoration \(L\) determines an eigenvalue \(m \in \mathbb{C}^{\times}\) by
\[
  v \rho(\mer) = m^{-1} v
\]
where \(v\) is any nonzero element of \(L\).
The inverse is unfortunate but correct and is chosen to match the
conventions of \cite{McPhailSnyderAlgebra,McPhailSnyderVolume}.
Clearly \(m\) does not depend on the choice of \(\Pi\).

\begin{definition}
  Let \(K\) be an oriented, framed knot in \(S^{3}\).
  An \defemph{open diagram} of \(K\) is an oriented,
  blackboard-framed tangle diagram \(D\) with one incoming and one
  outgoing boundary component whose closure is a diagram of \(K\).
  The \defemph{segments} of \(D\) are the edges of its underlying graph.
  We call the two segments incident to the boundary of the tangle the
  \defemph{boundary segments}, call the remainder the
  \defemph{internal segments}, and write \(E\) for the set of
  internal segments of \(D\).
  Writing \(\operatorname{cr}(D)\) for the crossing number of the
  diagram we have \(2 \operatorname{cr}(D) = |E| + 1\).
\end{definition}

\begin{definition}
  A \defemph{decorated matrix} is a pair \((g, [v]) \in \slg \times
  \pjsp\) of a matrix \(g\) and a right eigenspace \([v]\) for \(g\).
  The \defemph{distinguished eigenvalue} of \((g, [v])\) is the \(m
  \in \mathbb{C}^{\times}\) satisfying
  \[
    vg = m^{-1} v.
  \]
  We define a binary operation on decorated matrices by
  \begin{equation}
    (g, [v]) \qn (h, [w])
    \defeq
    (h^{-1} g h, [v h])
  \end{equation}
  A \defemph{decorated \(\slg\)-coloring} of a tangle diagram \(D\)
  assigns decorated matrices to every arc of \(D\) subject to the
  crossing relations
  \begin{equation}
    \begin{aligned}
      &
      \begin{tikzpicture}[line width=1, baseline=30, scale=1]
        \draw[->] (0,0) node[left] {\(g_{2}\)} \br (1.5,1)
        node[right] {\(g_{2} \qn g_{1}\)};
        \draw[white, line width=10] (0,1) node[left] {} \br (1.5,0)
        node[right] {};
        \draw[->] (0,1) node[left] {\(g_{1}\)} \br (1.5,0)
        node[right] {\(g_{1}\)};
      \end{tikzpicture}
      &
      &
      \begin{tikzpicture}[line width=1, baseline=30, scale=1]
        \draw[->] (0,1) node[left] {\(g_{1}\)} \br (1.5,0)
        node[right] {\(g_{1} \qninv g_{2}\)};
        \draw[white, line width=10] (0,0) node[left] {} \br (1.5,1)
        node[right] {};
        \draw[->] (0,0) node[left] {\(g_{2}\)} \br (1.5,1)
        node[right] {\(g_{2}\)};
      \end{tikzpicture}
    \end{aligned}
  \end{equation}
  where \(\qninv\) is the inverse of \(\qn\) defined by
  \(
    (g, [v]) \qninv (h, [w])
    \defeq
    (h g h^{-1}, [v h^{-1}]).
  \)
  We always require that a \(\slg\)-coloring of an open diagram
  assigns the same decorated matrix to both boundary segments, or
  equivalently that it gives a well-defined coloring of the closure.
\end{definition}

The Wirtinger presentation of \(\pi_{1}(\comp{K})\) makes it clear
that decorated representations of \(K\) are in bijection with
decorated \(\slg\)-colorings of \(D\) so we denote both by \(\rho\).
To define our invariant we need to make an additional choice of
diagram-dependent gauge data.%
\note{
  Our invariant is defined in terms of the octahedral decomposition
  associated to \(D\).
  This is a semi-ideal triangulation with two non-ideal vertices.
  The extra degrees of freedom are related to the choice of location
  of these vertices.
}
The value of \(\qinv{}\) is independent of this choice.

\begin{definition}
  A \defemph{shadow coloring} \((\rho, u)\) of a diagram \(D\) is a
  decorated \(\slg\)-coloring \(\rho\) of \(D\) along with an
  assignment \(u \colon j \mapsto u_{j}\) of column vectors \(u_{j} \in
  \mathbb{C}^{2}\) to the regions \(j\) of \(D\), subject to the rule
  \begin{equation}
    \begin{tikzpicture}[line width=1, baseline=00, scale=1]
      \draw[->] (0,0) node[left] {\((g, [v])\)} to (1.5,0) node[right] {};
      \node[above] at (.75,0.25){\(u\)};
      \node[below] at (.75,-0.25){\(g u\)};
    \end{tikzpicture}
  \end{equation}
  In the diagram above we say the region labeled by \(u\) is
  \defemph{above} the segment.
\end{definition}

It is not hard to see that a shadow coloring is determined uniquely
by the vector \(u_{0} \in \mathbb{C}^{2}\) assigned to the topmost
region of \(D\) (or to any other region).

\begin{definition}
  A shadow coloring of a diagram \(D\) defines \defemph{region parameters}
  \begin{equation}
    a_{j} \defeq \evec{1} u_{j}
    \text{ for each region \(j\)}
  \end{equation}
  and \defemph{segment parameters}
  \begin{equation}
    b_{i} \defeq
    -
    \frac{
      v_{i} \evec{2}
    }{
      v_{i} u_{i^{\uparrow}}
    }
    \text{ for each segment \(i\)}
  \end{equation}
  where \(i^{\up}\) is the region above segment \(i\):
  \[
    \begin{tikzpicture}[line width=1, baseline=00, scale=1]
      \draw[->] (0,0) node[left] {\((g_{i}, [v_{i}])\)} to (1.5,0)
      node[right] {};
      \node[above] at (.75,0.25){\(u_{i^{\uparrow}}\)};
    \end{tikzpicture}
  \]
  If none of the region or segment parameters are \(0\) or \(\infty\)
  we say the shadow coloring is \defemph{admissible}.
\end{definition}

\begin{definition}
  A shadow coloring \((\rho, u)\) of an open diagram \(D\) is
  \defemph{normalized} if it is admissible and the segment parameter
  assigned to the open segments is \(1\).
\end{definition}

Clearly one can always choose a normalized coloring by rescaling the
vectors \(u_{i}\).
This is a special case of the type (B) gauge transformation defined below.

\subsection{Gauge transformations and nondegeneracy }%
\label{sec:Gauge transformations and admissibility}

The diagram \(D\) determines a semi-ideal triangulation
\(\mathcal{T}_{D}\) of \(\comp{K}\) (with two non-ideal points)
called the \defemph{octahedral decomposition};
we refer to \cite{Kim2018} for a detailed discussion and
\cite{McphailSnyder2022hyperbolicstructureslinkcomplements,
McphailSnyder2024octahedralcoordinateswirtingerpresentation} for the
relationship to our parameters.
\(\mathcal{T}_{D}\) assigns each crossing of \(D\) four tetrahedra,
with one in each corner of the crossing.
The segment parameters naturally parametrize hyperbolic structures on
these tetrahedra in terms of \defemph{shape parameters} \(z_{k}\).
Labeling the segments and corners of a crossing as
\begin{equation}
  \label{eq:crossing labeled}
  \begin{tikzpicture}[line width=1, baseline=30, scale=1]
    \draw[->] (0,0) node[left] {\({2}\)} \br (1.5,1) node[right] {\({2'}\)};
    \draw[->] (0,1) node[left] {\({1}\)} \br (1.5,0) node[right] {\({1'}\)};
    \draw (0.75,1) node {\(\lN\)};
    \draw (0,0.5) node {\(\lW\)};
    \draw (0.75,0) node {\(\lS\)};
    \draw (1.5,0.5) node {\(\lE\)};
  \end{tikzpicture}
\end{equation}
the shape parameters of the tetrahedra are given by
\begin{equation}
  \label{eq:shape parameters}
  \begin{aligned}
    z_{\operatorname{N}} &= \left( \frac{b_{2'}}{b_{1}} \right)^{\epsilon}
    &
    z_{\operatorname{S}} &= \left( \frac{b_{2}}{b_{1'}}\right)^{\epsilon}
    \\
    z_{\operatorname{W}} &= \left(\frac{b_{2}}{m b_{1}}\right)^{\epsilon}
    &
    z_{\operatorname{E}} &= \left(\frac{m b_{2'}}{b_{1'}}\right)^{\epsilon}
  \end{aligned}
\end{equation}
where \(\epsilon\) is the sign of the crossing.
The shape parameters arise as cross ratios of the positions of the
vertices in \(\pjsp\).
In particular a tetrahedron is geometrically degenerate when its
shape parameter lies in \(\set{0, 1, \infty}\).

If a shadow coloring is admissible as defined above then none of the
shape parameters of \(\mathcal{T}_{D}\) are \(0\) or \(\infty\).%
\note{
  Strictly speaking a shape parameter \(z = z^{0}\) corresponds to a
  pair of edges of a vertex-ordered tetrahedron, with the other edge
  pairs assigned \(z^{1} = 1/(1 - z^{0})\) and \(z^{2} = 1 - 1/z^{0}\).
  Admissibility means that a particular preferred edge pair is not
  \(0\) or \(\infty\).
}
This is a necessary condition to define \(\qinv{}\), and every
representation \(\rho\) is gauge-equivalent to an admissible one.
Since \(\qinv{}\) is gauge-invariant this allows us to define it for
any \(\rho\).

\begin{definition}[\protect{\cite[Section
  4]{McphailSnyder2024octahedralcoordinateswirtingerpresentation}}]
  \label{def:gauge transformation}
  Let \((\rho, u)\) be a shadow coloring of \(D\) with segment colors
  \(i \mapsto (g_{i}, [v_{i}])\) and region colors \(j \mapsto u_{j}\).
  A \defemph{gauge transformation} of it is a coloring of the form
  \begin{itemize}
    \item[(A)] \(i \mapsto (h^{-1} g_i h, L_i h)\) and \(j \mapsto h^{-1} u_j\)
    \item[(B)] \(i \mapsto (g_i, L_i)\) and \(j \mapsto h^{-1} u_j\)
  \end{itemize}
  for some \(h \in \slg\).
  We say the new colorings are the images of \defemph{type (A)} and
  \defemph{type (B)} gauge transformations  \defemph{by \(h\)}.
\end{definition}

\begin{lemma}[\protect{\cite[Theorem
  2]{McphailSnyder2024octahedralcoordinateswirtingerpresentation}}]
  Let \(\rho\) be a decorated representation and \(D\) an open diagram of \(K\).
  Any shadow coloring \((\rho, u)\) of \(D\) is gauge-equivalent to
  one with an admissible shadow coloring.
\end{lemma}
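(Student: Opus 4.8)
The statement asserts that any shadow coloring $(\rho, u)$ of an open diagram $D$ can be moved, by a gauge transformation, to one whose shadow coloring is admissible — i.e.\ all region parameters $a_j = \evec{1} u_j$ and all segment parameters $b_i = -v_i \evec{2} / (v_i u_{i^\up})$ avoid $0$ and $\infty$. My plan is to treat the two potential failures separately and to observe that the relevant "bad loci" are determined by finitely many polynomial (indeed rational) conditions on the parameter $h \in \slg$ of the gauge transformation, so that a dimension count forces a generic $h$ to work.

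First I would set up the dependence of the parameters on $h$ explicitly. Fix the base coloring. A shadow coloring is rigidly determined by the single vector $u_0 \in \mathbb{C}^2$ attached to the topmost region (remark after the shadow coloring definition), and the region vectors at all other regions are obtained from $u_0$ by successively applying the segment matrices $g_i$ as one crosses segments; this is a fixed $\slg$-action, so each $u_j = P_j u_0$ for some word $P_j$ in the $g_i^{\pm 1}$. Under a type (B) gauge transformation by $h$ we replace $u_j \mapsto h^{-1} u_j = h^{-1} P_j u_0$, leaving the matrices $(g_i, [v_i])$ — hence the covectors $v_i$ — unchanged. So along the type (B) family the region parameter $a_j(h) = \evec{1} h^{-1} P_j u_0$ is a regular function of the entries of $h^{-1}$ (a ratio of such if we projectivize), and the segment parameter $b_i(h) = - v_i \evec{2} / (v_i h^{-1} P_{i^\up} u_0)$ is rational in $h^{-1}$ with a fixed, $h$-independent numerator. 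The key point: the numerator $v_i \evec{2}$ of $b_i$ does not depend on $h$ at all, so $b_i(h) = 0$ can be arranged never to happen by a prior, $h$-independent adjustment (or one notes it is automatically $\neq 0$ when $v_i$ is not proportional to $\evec{1}^{t}$, which can be fixed by a type (A) conjugation) — while $b_i = \infty$ and $a_j = 0$ each cut out a proper Zariski-closed subset of the $\slg$ of gauge parameters, provided none of the linear functionals $w \mapsto v_i w$ or $w \mapsto \evec{1} w$ vanishes identically on the orbit $\{h^{-1} P_j u_0\}$.

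The main work — and the main obstacle — is exactly checking that each of these bad conditions is a \emph{proper} closed subset, i.e.\ that one cannot be forced to hit $0$ or $\infty$ for \emph{every} $h$. This is where both flavors of gauge transformation are needed: type (B) alone moves all region vectors by the same $h^{-1}$, which is not enough freedom to independently clear every $a_j$ and $b_i$, whereas interleaving a type (A) conjugation (which changes the $v_i$ and the $g_i$, hence the words $P_j$) breaks any residual degeneracy. Concretely I would argue: for a fixed segment $i$, the functional $u \mapsto v_i u$ vanishes on the whole set $\{h^{-1} P_{i^\up} u_0 : h \in \slg\}$ only if $u_0 = 0$ or $P_{i^\up} u_0$ spans the (one-dimensional) kernel line of $v_i$ in a way stable under all of $\slg$ — impossible since $\slg$ acts transitively on lines in $\mathbb{C}^2$. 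Hence $b_i = \infty$ is avoided on a dense open set; similarly $a_j = 0$ is avoided on a dense open set; the segment parameter numerator $v_i \evec{2}$ is handled by first applying a type (A) transformation to make every $[v_i]$ not equal to $[\evec{1}^{t}]$ (a finite set of lines to avoid, generic in $\slg$). Finitely many dense opens intersect in a dense open, so a suitable $h$ exists. I would then remark that this is precisely the content of \cite[Theorem 2]{McphailSnyder2024octahedralcoordinateswirtingerpresentation}, whose proof organizes the same bookkeeping — the only genuinely delicate step being the transitivity/non-degeneracy argument ruling out a functional that vanishes identically along a gauge orbit, which I expect to consume most of the proof.
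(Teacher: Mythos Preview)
The paper does not supply a proof of this lemma: it is imported wholesale from \cite[Theorem 2]{McphailSnyder2024octahedralcoordinateswirtingerpresentation}, with the citation placed in the lemma header and no argument given in the text. There is therefore nothing in the present paper to compare your proposal against.

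For what it is worth, your plan is sound. The one place you overestimate the difficulty is the closing step: you predict that the non-degeneracy check (ruling out a linear functional that vanishes identically along a type~(B) orbit) will ``consume most of the proof,'' but in fact it is immediate --- for any nonzero \(w \in \mathbb{C}^{2}\) the orbit \(\{h^{-1} w : h \in \slg\}\) is all of \(\mathbb{C}^{2} \setminus \{0\}\), so no nonzero covector can annihilate it. With that in hand, each of the finitely many conditions \(a_{j} = 0\) and \(v_{i} u_{i^{\uparrow}} = 0\) cuts out a proper Zariski-closed subset of the type~(B) parameter space, and a prior generic type~(A) move disposes of the numerator conditions \(v_{i}\evec{2} = 0\) (which, as you correctly note, type~(B) cannot touch). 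The whole argument is short rather than delicate.
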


While we can ensure the shape parameters lie in \(\mathbb{C}\setminus
\set{0}\) we cannot necessarily ensure they avoid \(1\).
Having shape parameter \(1\) is independent of the choice of shadow
coloring and is equivalent to a natural condition on the decorated
representation:

\begin{definition}
  A crossing with a decorated \(\slg\)-coloring
  \[
    \begin{tikzpicture}[line width=1, baseline=30, scale=1]
      \draw[->] (0,0) node[left] {\((g_{2}, [v_{2}])\)} \br (1.5,1)
      node[right] {\((g_{2'}, [v_{2'}])\)};
      \draw[->] (0,1) node[left] {\((g_{1}, [v_{1}])\)} \br (1.5,0)
      node[right] {\((g_{1'}, [v_{1'}])\)};
    \end{tikzpicture}
  \]
  is \defemph{pinched} if \([v_{1}] = [v_{2}]\), equivalently
  \([v_{i}] = [v_{j}]\) for any pair \((i,j)\) of adjacent segments.
  (Note that this condition does not depend on a choice of shadow coloring.)
  A coloring of a diagram is pinched if any of its crossings are.
  If \(\rho\) is a decorated representation of \(K\) and \(D\) an
  open diagram of \(K\) we say \(D\) is  \defemph{\(\rho\)-pinched}
  if the coloring induced by \(\rho\) is pinched (and conversely that
  \(\rho\) is  \defemph{\(D\)-pinched}).
  If no crossing is pinched we say \((D, \rho)\) is \defemph{smooth}.
\end{definition}

A nugatory crossing is always pinched.
If \(D\) is a crossing-minimal diagram it has no nugatory crossings
and in this case most (but not all) nontrivial representations
\(\rho\) are \(D\)-smooth.
For example, up to gauge transformation the \(8_{5}\) knot has \(11\)
boundary-parabolic representations \cite{Kim2018}.
In the standard diagram \(D\) \cite[Figure 28]{Kim2018} of this knot
\(10\) are smooth and \(1\) is pinched.
\(8_{5}\) is hyperbolic and its discrete faithful representation is
\(D\)-smooth.
According to the database of boundary-parabolic representations computed by
\citeauthor{DiagramSite} \cite{DiagramSite} this pattern holds in general:
for crossing-minimal \(D\) most but not all representations are
\(D\)-smooth, but the hyperbolic holonomy representation is always smooth.

In the standard terminology a not \(\rho_{\hyp}\)-pinched diagram is
said to be \defemph{essential}.%
\note{
  More formally the associated octahedral decomposition is essential.
}
By \cite[Theorem
3]{McphailSnyder2024octahedralcoordinateswirtingerpresentation} being
essential is equivalent to a purely group-theoretic condition called
\defemph{arc-faithfulness}.
It is not hard to see that if \(D\) is not arc-faithful then it is
pinched for every decorated representation \(\rho\).
It is natural to conjecture (see \cite[Section
5]{McphailSnyder2024octahedralcoordinateswirtingerpresentation}) that
any hyperbolic knot has an arc-faithful diagram.
It is known that any reduced alternating diagram of a hyperbolic knot
is arc-faithful.

At any pinched crossing the shape parameters are \(1\) so the
\(R\)-matrix elements defining \(\qinv{}\) encounter the poles and
zeros of the quantum dilogarithm.
One can show that these cancel, but in a delicate way that prevents
our state integrals from being well-defined.
More generally to avoid the singularities we need to ensure that no
shape parameter has norm \(1\), and if no crossings are pinched then
we can achieve this via a gauge transformation.

\begin{lemma}[\protect{\cite[Corollary
  5.3]{McphailSnyder2024octahedralcoordinateswirtingerpresentation}}]
  \label{thm:avoid unit circle}
  Let \(\rho\) be a decorated representation of a knot \(K\) and
  \(D\) be an open diagram of \(K\).
  Suppose that \(D\) is not \(\rho\)-pinched.
  Then there is a representation \(\rho'\) gauge-equivalent to
  \(\rho\) and a shadow coloring \((\rho', u)\) of \(D\) in which  no
  shape parameter has norm \(1\).
  In this case we say \((\rho', u)\) \defemph{avoids the unit circle}.
\end{lemma}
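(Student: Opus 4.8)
The plan is to fix the combinatorics and the gauge class of the representation, exhibit all shape parameters of $\mathcal{T}_{D}$ as Möbius functions of a single complex gauge parameter, and observe that ``some shape parameter has modulus $1$'' then cuts out a nowhere-dense real-analytic subset that a generic choice avoids.

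First I would use \cite[Theorem 2]{McphailSnyder2024octahedralcoordinateswirtingerpresentation} to replace $\rho$ by a gauge-equivalent representation carrying an \emph{admissible} shadow coloring, so that every shape parameter lies in $\mathbb{C}^{\times}$; since being $\rho$-pinched depends only on the eigenlines $[v_{i}]$, which are merely relabelled — never identified — by gauge transformations, the new representation is still not $D$-pinched. Fix this decorated coloring $\rho$. Recall that a shadow coloring of $\rho$ is determined by the vector $u_{0}$ on the topmost region, the others being $u_{j} = P_{j} u_{0}$ for $\slg$-valued propagation operators $P_{j}$; hence $b_{i} = -(v_{i}\evec{2})/(v_{i} u_{i^{\up}})$ is a constant over the linear functional $u_{0} \mapsto v_{i} P_{i^{\up}} u_{0}$, and by \eqref{eq:shape parameters} each shape parameter $z_{k}$ is, as a function of $[u_{0}] \in \pjsp$, a ratio of two linear functionals of $u_{0}$ raised to $\epsilon = \pm 1$ and scaled by a power of $m$ — in other words a Möbius function of $[u_{0}]$. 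The starting coloring is admissible, so the numerators $v_{i}\evec{2}$ are nonzero and independent of $u_{0}$; thus admissibility fails only where some denominator $v_{i} P_{i^{\up}} u_{0}$ vanishes, i.e. at finitely many $[u_{0}]$.

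The key step is to show each $z_{k}$ is \emph{non-constant} in $[u_{0}]$. The Möbius function computing $z_{k}$ is constant precisely when its numerator and denominator are proportional row vectors, and a short computation with \eqref{eq:shape parameters} — substituting the shadow-coloring rule relating the $u$'s of two regions adjacent at the crossing and using that the over-strand's eigenline is fixed by its holonomy — identifies this proportionality with the equality $[v_{i}] = [v_{j}]$ for an adjacent pair $(i,j)$ at that crossing, i.e. with the crossing being pinched. (This also explains, conversely, why all four shape parameters collapse to $1$ at a pinched crossing.) Since no crossing of $D$ is pinched, each $z_{k}$ is a non-constant Möbius function, so $\{[u_{0}] \in \pjsp : |z_{k}([u_{0}])| = 1\}$ is the preimage of the unit circle under a Möbius transformation, hence a single generalized circle — a proper, real-analytic $1$-submanifold of $\pjsp$.

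Finally I would assemble: the ``bad'' locus is the union of finitely many points (where some $b_{i} \in \{0,\infty\}$) and finitely many generalized circles (one per tetrahedron of $\mathcal{T}_{D}$), hence closed with empty interior, so its complement is open and dense. Any $[u_{0}]$ in the complement yields a shadow coloring $(\rho', u)$, gauge-equivalent to the original, that is admissible and in which no shape parameter has modulus $1$. I expect the key step to be the main obstacle: pinning down exactly when $z_{k}([u_{0}])$ degenerates to a constant, and checking that a constant of modulus $1$ can only arise from a pinched crossing. This is elementary linear algebra, but it has to be carried out in each of the four corners of a crossing and for both crossing signs, using the precise dictionary between segment parameters, shadow colorings, and octahedral shapes from \cite{McphailSnyder2022hyperbolicstructureslinkcomplements, McphailSnyder2024octahedralcoordinateswirtingerpresentation}.
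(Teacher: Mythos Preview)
The paper gives no proof of this lemma; it is imported verbatim as Corollary~5.3 of the companion paper \cite{McphailSnyder2024octahedralcoordinateswirtingerpresentation}, so there is no in-text argument to compare against. Your proposal is correct and is the natural approach: after a preliminary type~(A) gauge move to force every $v_i \evec{2} \ne 0$, vary only $[u_0] \in \pjsp$ via type~(B) moves, observe that each shape parameter $z_k$ is a M\"obius function of $[u_0]$ that is non-constant exactly when the crossing is not pinched, and conclude that the bad locus is a finite union of generalized circles and points. The case-check you flag as the main obstacle is indeed short once one uses the decoration rule $[v_{2'}] = [v_2 g_1]$ at a crossing: for every corner and either sign the two row vectors appearing in the ratio are proportional iff $[v_1] = [v_{2'}]$, independently of the propagators $P_j$. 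One small addition worth noting: for the downstream applications you also want the region parameters $a_j = \evec{1} P_j u_0$ to be nonzero, but each of these vanishes on a single point of $\pjsp$ and can be absorbed into the bad locus without change.
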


\subsection{Segment equations}%
\label{sec:Segment equations}

The segment parameters have a natural geometric
interpretation: they parametrize the locations of the ideal vertices
of the octahedral decomposition.
Following
\cite{McphailSnyder2024octahedralcoordinateswirtingerpresentation} we
obtained the segment parameters from a shadow coloring
but one can also work in the opposite direction.
An assignment of numbers \(b_{i}\) to the segments of \(D\) induces a
decorated representation of \(K\) if and only if the shape parameters
satisfy Thurston's gluing equations.
These can be summarized in one equation for each segment.

\begin{definition}
  \label{def:segment equations}
  Choose a meridian eigenvalue \(m \in \mathbb{C} \setminus \set{0}\).
  Assign a variable \(b_{i} \in \mathbb{C} \setminus \set{0}\) to
  every segment of a cut presentation \(D\) (with the variables of
  the boundary segments matching as usual).
  Use these to assign each corner of the diagram a shape parameter
  \(z_{k}\) as in \cref{eq:shape parameters}; we assume that the
  \(b_{i}\) are chosen so that no \(z_{k}\) is \(1\).
  Labeling a crossing of sign \(\epsilon\) as in \eqref{eq:crossing
  labeled} the choice of \(m\) and \(b_{i}\) determine parameters
  \(a_{i}\) for each segment%
  \note{
    We denote these by \(a_{i}\) because they correspond to ratios of
    the region parameters \(a_{j}\).
  }
  by
  \begin{equation}
    \label{eq:segment a vars}
    \begin{aligned}
      a_{1}
      &=
      m^{\epsilon}
      \frac{
        1 - z_{\lW}
      }{
        1 - z_{\lN}
      }
      &
      a_{4}
      &=
      m^{-\epsilon}
      \frac{
        1 - z_{\lE}
      }{
        1 - z_{\lN}
      }
      \\
      a_{2}
      &=
      m^{-\epsilon}
      \frac{
        1 - z_{\lS}
      }{
        1 - z_{\lW}
      }
      &
      a_{3}
      &=
      m^{\epsilon}
      \frac{
        1 - z_{\lS}
      }{
        1 - z_{\lE}
      }
    \end{aligned}
  \end{equation}
  The construction above assigns every segment \emph{two} parameters
  \(a_{i}\) and \(a_{i}'\) from the crossings at both ends.
  The \defemph{segment equations} say that these parameters agree for
  each segment.
\end{definition}

The segment equations were first studied by
\textcite{arXiv:math/0009165} and are discussed in detail by \textcite{Kim2018}.
Our formulation follows the conventions of
\cite{McphailSnyder2022hyperbolicstructureslinkcomplements,McphailSnyder2024octahedralcoordinateswirtingerpresentation}.

\begin{proposition}
  \label{thm:segment solutions are reps}
  For an open knot diagram \(D\) there is a bijection between the sets of
  \begin{thmenum}
  \item solutions of the segment equations of \(D\) with meridian
    eigenvalue \(m\)
  \item shadow-colored decorated representations \((\rho, u)\) with
    meridian eigenvalue \(m\) that are not \(D\)-pinched
    \qedhere
  \end{thmenum}
\end{proposition}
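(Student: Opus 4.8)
The plan is to realize the bijection as the \emph{segment-parameter map}: to a shadow-colored decorated representation $(\rho,u)$ of $D$ with meridian eigenvalue $m$ we assign the tuple $(b_i)_i$ of its segment parameters, and we show this is a well-defined bijection onto the set of solutions of the segment equations with eigenvalue $m$. The passage from $b$ to the shape parameters $z_k$ via \eqref{eq:shape parameters} and then to the auxiliary parameters $a_i$ via \eqref{eq:segment a vars} is already in place, so the real content is (i) identifying the image, (ii) inverting the map, and (iii) matching the non-degeneracy conditions on the two sides.

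First I would pin down the image. By the discussion preceding the statement, in the form recalled from \cite{McphailSnyder2022hyperbolicstructureslinkcomplements,McphailSnyder2024octahedralcoordinateswirtingerpresentation} (see also \cite{Kim2018,arXiv:math/0009165}), an assignment of shape parameters to the tetrahedra of $\mathcal{T}_D$ arises from a decorated $\slg$-representation with meridian eigenvalue $m$ if and only if Thurston's gluing equations hold, and --- through the auxiliary parameters $a_i$, $a_{i'}$ of \eqref{eq:segment a vars} --- the gluing equations are equivalent, one segment at a time, to the segment equations. Hence the $b_i$ of a genuine shadow-colored representation solve the segment equations, once we know they are finite and nonzero and that no $z_k$ equals $1$. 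The condition $z_k \ne 1$ is the output of a local computation at a single crossing: combining \eqref{eq:shape parameters} with the defining formula for the $b_i$, the crossing relations, and $v_1 g_1 = m^{-1} v_1$ identifies the locus where a corner shape parameter equals $1$ with the locus $[v_1]=[v_2]$, that is, with the crossing being pinched; so all $z_k \ne 1$ exactly when $D$ is not $\rho$-pinched, which is the standing hypothesis on solutions in \cref{def:segment equations}. That the $b_i$ are finite and nonzero --- equivalently, by \eqref{eq:shape parameters}, that no $z_k$ is $0$ or $\infty$ --- is a further non-degeneracy point that I would need to reconcile with ``not $D$-pinched'' as part of the bookkeeping below.

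Next I would construct the inverse. Given a solution $(b_i)$: the $z_k$ of \eqref{eq:shape parameters} satisfy the gluing equations, so by the cited results they arise from a decorated representation $\rho$ of $K$ with meridian eigenvalue $m$; the auxiliary parameters $a_i$ recover the region parameters $a_j = \evec1 u_j$ up to the overall scale that \eqref{eq:segment a vars} leaves free, and the relation $b_i = -v_i \evec2 / (v_i u_{i^\up})$ then recovers the second coordinate of each region vector once that scale is fixed; the region rule $u_{\dn} = g\,u_{\up}$ is globally consistent precisely because the gluing equations hold. A direct check with the formulas shows this reconstruction inverts the segment-parameter map, giving injectivity and surjectivity simultaneously.

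I expect the main obstacle to be this non-degeneracy bookkeeping together with global consistency: verifying that a solution of the segment equations integrates up to a representation of the \emph{whole} knot group and to a globally consistent shadow coloring --- not merely to crossing-local data --- and that the ``not $D$-pinched'' locus on the representation side matches exactly the constraints $b_i \in \mathbb{C}\setminus\set{0}$ and $z_k \ne 1$ on the solution side. The first part is imported from \cite{McphailSnyder2024octahedralcoordinateswirtingerpresentation}; the genuinely new ingredient is the local pinched-versus-$(z_k = 1)$ computation of the second step, which is what forces the bijection to have precisely the stated domains.
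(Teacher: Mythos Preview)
Your approach is correct and matches the paper's in substance: both route the bijection through the octahedral-coloring data $(a_i,b_i)$ and import the hard work from \cite{McphailSnyder2024octahedralcoordinateswirtingerpresentation}. The only difference is presentational. The paper names the intermediate set explicitly --- calling $Y$ the set of non-pinched \emph{octahedral colorings} in the sense of \cite{McphailSnyder2022hyperbolicstructureslinkcomplements} --- and then argues in two steps: (a) $\leftrightarrow Y$ is immediate (a solution of the segment equations \emph{is} by definition a well-defined assignment of $a_i$, and forgetting the $a_i$ goes back), while $Y \leftrightarrow$ (b) is a black-box citation of \cite[Theorem~2.11 and Theorem~1]{McphailSnyder2024octahedralcoordinateswirtingerpresentation}. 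You instead describe the composite map and its inverse directly.

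The non-degeneracy bookkeeping you flag --- that ``not $D$-pinched'' on the representation side must line up with $b_i\in\mathbb{C}\setminus\{0\}$ and $z_k\ne 1$ on the solution side, and that the local data globalizes --- is exactly what the paper outsources to those cited theorems rather than reproving; your instinct that this is where the content lives is right, and the paper agrees by citing rather than arguing. Your local ``pinched $\Leftrightarrow z_k=1$'' computation is not spelled out in the paper's proof either, being absorbed into the phrase ``non-pinched octahedral coloring.''
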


\begin{proof}
  Suppose we have a solution of the segment equations with meridian
  eigenvalue \(m\).
  Then by definition there is a well-defined assignment of parameters
  \(a_{i}\) to the segments of \(D\) that give a non-pinched
  \defemph{octahedral coloring} of \(D\) in the sense of
  \cite{McphailSnyder2022hyperbolicstructureslinkcomplements}.
  Call the set of such colorings \(Y\).
  A point of \(Y\) gives a solution of the segment equations (by
  forgetting the \(a_{i}\)) so there is a bijection between set (a) and \(Y\).
  \cite[Theorem 2.11 and Theorem
  1]{McphailSnyder2024octahedralcoordinateswirtingerpresentation}
  establish a bijection between \(Y\) and the set (b).
\end{proof}

\section{State integrals for the quantized Chern-Simons invariant}
\label{sec:State integrals big}

\subsection{Quantum dilogarithms}%
\label{sec:Quantum dilogarithms}

For a nonzero complex parameter \(\mathsf b\) \defemph{Faddeev's
noncompact quantum dilogarithm} \cite{Faddeev2001} is
\begin{equation}
  \Phi_{\mathsf b}(z) \defeq
  \exp
  \int_{\mathbb{R} + \ii \epsilon}
  \frac{
    \exp(-2 \ii zw)
  }{
    4 \sinh(w \mathsf b ) \sinh(w/\mathsf b)
  }
  \frac{dw}{w}
\end{equation}
for \(|\Im z| < |\Im c_{\mathsf b}|\), where
\(
  c_{\mathsf b} \defeq \frac{i}{2} \left( \mathsf b + \mathsf b^{-1}\right).
\)
\(\Phi_{\mathsf b}\) extends to a meromorphic function on
\(\mathbb{C}\) with an essential singularity at infinity.
We consider the case where \(\mathsf b = \sqrt \nr\) and the function
\(\pfl{}\) with
\begin{equation}
  \exp \pfl{t}
  =
  \Phi_{\sqrt \nr}\left(\ii \sqrt{\nr} t - c_{\sqrt \nr} +
  \frac{\ii}{\sqrt \nr} \right)
  .
\end{equation}
Strictly speaking \(\pfl{\zeta}\) depends on a choice of branch of
the logarithm but our formulas only depend on the well-defined
function \(e^{\pfl{\zeta}}\).
It is nonetheless convenient to use the logarithmic notation.
The recurrence relation
\begin{equation}
  \label{eq:pfl recurrence}
  \exp \pfl{t} =
  \frac{
    \exp \pfl{t-1/\nr}
  }{
    1 - e^{\tu t}
  }
\end{equation}
connects \(\pfl{}\) to the \(q\)-factorial and shows that the poles
and zeros of \(e^{\pfl{t}}\) all occur when \(t \in \frac{1}{\nr}
\mathbb{Z}\) and \(t \not \in (0,1)\).

\subsection{The action and state sums}
\label{sec:Definition of the invariant}

Following \cite[Section 6]{McPhailSnyderVolume} we define the action
functional of a diagram.
Consider the functions
\begin{equation}
  \label{eq:crossing qlog}
  \begin{aligned}
    \crossingfunction{+}{\mu}{t_{1}, t_{2}, t_{3}, t_{4}}
    &\defeq
    \pfl{t_{4} - t_{1}}
    +
    \pfl{t_{2} - t_{3}}
    \\
    &\phantom{\defeq}
    -
    \pfl{t_{2} - t_{1} - \mu/\nr + 1 - 1/\nr}
    -
    \pfl{t_{4} - t_{3} + \mu/\nr}
    \\
    &\phantom{\defeq}
    - \tu \mu(t_{2} + t_{3} - t_{1}- t_{4} - \mu/\nr)
    \\
    \crossingfunction{-}{\mu}{t_{1}, t_{2}, t_{3}, t_{4}}
    &\defeq
    -
    \pfl{t_{1} - t_{4} + 1 - 1/\nr}
    -
    \pfl{t_{3} - t_{2} + 1 - 1/\nr}
    \\
    &\phantom{\defeq}
    +
    \pfl{t_{1} - t_{2} + \mu/\nr}
    +
    \pfl{t_{3} - t_{4} - \mu/\nr + 1 - 1/\nr}
    \\
    &\phantom{\defeq}
    + \tu \mu(t_{2} + t_{3} - t_{1}- t_{4} - \mu/\nr)
  \end{aligned}
\end{equation}
These are logarithmic versions of the braiding kernels of
\cite[Section 5]{McPhailSnyderVolume}.
As discussed there \(t \mapsto \exp
\crossingfunction{\epsilon}{\mu}{t}\) is a quasi-periodic function of
each \(t_{i}\).
The periodicity constants are related to the segment equations of
\cref{sec:Segment equations}, i.e.\ to the gluing equations of the
octahedral decomposition.

Fix an oriented framed knot \(K\) and an open diagram \(D\) of \(K\).
Assign each internal segment a complex variable \(t_{i}\), assign the
boundary segments \(0\), and fix a complex number \(\mu\) (later, a
log-meridian of \(K\)).
Construct a function \(\actq{D,\mu}{t}\) of the variables \(t_{i}\)
by taking a sum over crossing functions
\begin{align}
  \label{eq:crossing positive}
  \begin{tikzpicture}[line width=1, baseline=10, scale=1]
    \draw[->] (0,0) node[left] {\(t_{2}\)} \br (1.5,1) node[right] {\(t_{4}\)};
    \draw[white, line width=10] (0,1) node[left] {} \br (1.5,0) node[right] {};
    \draw[->] (0,1) node[left] {\(t_{1}\)} \br (1.5,0) node[right] {\(t_{3}\)};
  \end{tikzpicture}
  \to
  \frac{1}{\nr}
  \crossingfunction{+}{\mu}{t_{1}, t_{2}, t_{3}, t_{4}}
  \\
  \label{eq:crossing negative}
  \begin{tikzpicture}[line width=1, baseline=10, scale=1]
    \draw[->] (0,1) node[left] {\(t_{1}\)} \br (1.5,0) node[right] {\(t_{3}\)};
    \draw[white, line width=10] (0,0) node[left] {} \br (1.5,1) node[right] {};
    \draw[->] (0,0) node[left] {\(t_{2}\)} \br (1.5,1) node[right] {\(t_{4}\)};
  \end{tikzpicture}
  \to
  \frac{1}{\nr}
  \crossingfunction{-}{\mu}{t_{1}, t_{2}, t_{3}, t_{4}}
\end{align}
(the factors of \(1/\nr\) are convenient when discussing asymptotics)
and assigning critical points shifts in the variables (which we could
also think of as \(\delta\) functions)
\begin{align}
  &
  \begin{tikzpicture}[line width=1, baseline=10, scale=1]
    \draw[->, looseness = 1.5] (0,0) node[right] {\(t\)} to [out
    =180, in=180] (0,1) node[right] {\(t\)};
  \end{tikzpicture}
  &
  &
  \begin{tikzpicture}[line width=1, baseline=10, scale=1]
    \draw[->, looseness = 1.5] (0,0) node[left] {\(t\)} to [out =0,
    in=0] (0,1) node[left] {\(t\)};
  \end{tikzpicture}
  &
  &
  \begin{tikzpicture}[line width=1, baseline=10, scale=1]
    \draw[<-, looseness = 1.5] (0,0) node[right] {\(t + 1/\nr - 1\)}
    to [out =180, in=180] (0,1) node[right] {\(t\)};
  \end{tikzpicture}
  &
  &
  \begin{tikzpicture}[line width=1, baseline=10, scale=1]
    \draw[<-, looseness = 1.5] (0,0) node[left] {\(t + 1 - 1/\nr\)}
    to [out =0, in=0] (0,1) node[left] {\(t\)};
  \end{tikzpicture}
\end{align}

\begin{definition}
  We call \(\actq{D, \mu}{}\) the \defemph{action} of the diagram.
  Note that it depends on the diagram \(D\) and on \(\mu\) but
  \emph{not} on a choice of representation.
\end{definition}

\begin{example}
  \label{ex:figure eight}
  Below is an open diagram \(D\) of the figure eight knot:
  \begin{equation*}
    \begin{tikzpicture}[line width=1, scale=1, xscale=1.5]
      \draw[<-] (0,1) node[above] {\(t_{6}\)} \br (1,0) node[below] {\(t_{5}\)};
      \draw[white, line width=10] (0,0) node {} \br (1,1) node {};
      \draw[->] (0,0) node[below] {\(0\)} \br (1,1) node[above] {\(t_{1}\)};

      \draw[->] (1,1) node {} \br (2,0) node[below] {\(t_{2}\)};
      \draw[white, line width=10] (1,0) node {} \br (2,1) node {};
      \draw[<-] (1,0) node {} \br (2,1) node[above] {\(t_{4}\)};

      \draw[<-] (2,1) node {} \br (3,2) node {};
      \draw[white, line width=10] (2,2) node {} \br (3,1) node {};
      \draw[->] (2,2) node[above] {\(t_{6}\)} \br (3,1) node[below] {\(t_{7}\)};

      \draw[-] (3,1) node {} \br (4,2) node {};
      \draw[white, line width=10] (3,2) node {} \br (4,1) node {};
      \draw[<-] (3,2) node[above] {\(t_{3}\)} \br (4,1) node[below] {\(t_{2}\)};

      \draw (2,0) to (4,0) ;
      \draw[looseness = 1.5] (4,0) to [out =0, in=0] (4,1);

      \draw (0,2) to (2,2) ;
      \draw[looseness = 1.5] (0,1) to [out =180, in=180] (0,2);

      \draw[->] (-1,0) to (0,0);

      \draw[->] (4,2) node[above] {\(0\)} to (5,2);
    \end{tikzpicture}
  \end{equation*}
  Assigning variables to the segments as shown the action is
  \begin{align*}
    \actq{D, \mu}{t}
    &=
    \frac{1}{\nr}
    \crossingfunction{+}{\mu}{0, t_{5}, t_{1}, t_{6}}
    +
    \frac{1}{\nr}
    \crossingfunction{+}{\mu}{t_{4} + \tfrac{1}{\nr} - 1, t_{1}, t_{5}
    + \tfrac{1}{\nr} - 1, t_{2}}
    \\
    &\phantom{=}-
    \frac{1}{\nr}
    \crossingfunction{-}{\mu}{t_{3} + \tfrac{1}{\nr} - 1, t_{6}, t_{4}
    + \tfrac{1}{\nr} - 1, t_{7}}
    +
    \frac{1}{\nr}
    \crossingfunction{-}{\mu}{t_{7}, t_{2}, 0, t_{3}}
  \end{align*}
  The parameter shifts come from using critical points to rotate the
  crossings as in \cite[Figure 13]{McPhailSnyderVolume}.
\end{example}

Let \(\rho\) be a decorated representation of \(K\).
Choose an admissible, normalized shadow coloring \((\rho, u)\) of
\(D\) and a \defemph{log-meridian} \(\mu\), a logarithm of the
meridian eigenvalue \(m\) determined by the decoration.
(Recall that a normalized shadow coloring is one where the segment
parameter of the boundary segments is \(1\).)
Assume that the coloring is not pinched: if it is then \(\qinv{}\) is
still well-defined but we need to use a different expression for the
\(R\)-matrix coefficients.

Choose logarithms \(\beta_{i}\) of the segment parameters \(b_{i}\)
with \(e^{\tu \beta_{i}} = b_{i}\).
It is helpful to think of this choice as a vector \(\beta \in
\mathbb{C}^{E}\), where \(E\) is the set of internal segments.
We call \(\beta\) a \defemph{log-coloring}.
Since we are using a normalized coloring we can choose \(0\) as the
log-parameter of the boundary segments.
Re-writing \cite[Lemma 6.3]{McPhailSnyderVolume} in our normalizations gives

\begin{theorem}[\protect{\cite[Lemma 6.3]{McPhailSnyderVolume}}]
  \label{thm:state sum}
  The invariant \(\qinv{}\) defined in \cite{McPhailSnyderVolume} is
  given by the sum
  \begin{equation}
    \label{eq:state sum}
    \qinv{K, \rho, \mu}
    =
    \frac{1}{\nr^{|E|/2 + 1/2}}
    \sum_{n \in \nset{\nr}^{E}}
    e^{ \nr  \actq{D}{(\beta + n)/\nr} }
  \end{equation}
  where \(\nset{\nr} \defeq \set{0, 1, \dots, \nr -1}\).
  \( \qinv{K, \rho, \mu} \) is gauge-invariant, and in particular is
  independent of the choice of shadow coloring and log-coloring.
\end{theorem}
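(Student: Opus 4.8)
The plan is to deduce \cref{thm:state sum} from the definition of \(\qinv{}\) in \cite{McPhailSnyderVolume} by unpacking that definition into matrix coefficients of the holonomy \(R\)-matrices of \cite{McPhailSnyderAlgebra} and recognizing those coefficients as exponentials of the crossing functions \eqref{eq:crossing function}. Recall that \cite{McPhailSnyderVolume} assigns to a log-shadow-colored open diagram an operator on a tensor power of \(\mathbb{C}^{\nr}\) --- one tensor factor per internal segment --- built by composing one holonomy \(R\)-matrix per crossing together with (co)evaluation morphisms, and defines \(\qinv{}\) as the resulting scalar divided by \(\nr^{\operatorname{cr}(D)}\). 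Writing this composition out as a sum of products of matrix entries turns it into a sum over assignments \(n \in \nset{\nr}^{E}\) of a residue to each internal segment, the summand being a product over crossings of the appropriate \(R\)-matrix entry. The theorem thus reduces to two things: identifying that product with \(e^{\nr \actq{D}{(\beta + n)/\nr}}\), and checking the power of \(\nr\).

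For the first point I would write down the explicit form of the holonomy \(R\)-matrix entries from \cite{McPhailSnyderAlgebra, McPhailSnyderVolume} in terms of the quantum dilogarithm \(\pfl{}\) and the segment data, and verify by direct computation that the entry of a crossing of sign \(\epsilon\), labeled as in \eqref{eq:crossing labeled} and indexed by the residues on its four incident segments, equals \(\exp \crossingfunction{\epsilon}{\mu}{t_{1}, t_{2}, t_{1'}, t_{2'}}\) with \(t_{i} = (\beta_{i} + n_{i})/\nr\). In this identification the four \(\pfl{}\)-terms of \eqref{eq:crossing qlog} match the four corner tetrahedra, whose shapes are the \(z_{k}\) of \eqref{eq:shape parameters}; the terms of \eqref{eq:crossing function} that are affine-linear in the \(t_{i}\) come from the scalar prefactors (powers of the root of unity \(e^{\tu/\nr}\) and of \(m\)) in the \(R\)-matrix; and the integer and \(\mu\)-shifts are absorbed using the recurrence \eqref{eq:pfl recurrence} together with the normalization of the boundary segments to log-parameter \(0\).

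The subtle point is orientations. The holonomy \(R\)-matrices are defined only for a fixed orientation of the two strands at a crossing, so for a general open diagram one rotates each crossing into standard position; as in \cite[Figure 13]{McPhailSnyderVolume} (illustrated in \cref{ex:figure eight}) this rotation shifts the relevant segment variables by exactly the constants appearing in the parameter-shift data in the definition of \(\actq{D,\mu}{}\). I would check that these shifts coincide, so that the product of \(R\)-matrix entries over all crossings is precisely \(e^{\nr \actq{D}{(\beta + n)/\nr}}\); this requires careful attention to which incident segment occupies which argument slot of each \(R\)-matrix and to how the rotation interacts with the \(\mu\)-dependent affine terms of \eqref{eq:crossing function}. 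At the same stage one checks that the state sum depends only on the segment log-parameters \(\beta\): the log-region-parameters of the log-coloring used in \cite{McPhailSnyderVolume} enter the summand only through factors that telescope over the contraction and cancel. The normalization is then a short count: \(\nr^{-\operatorname{cr}(D)}\) becomes \(\nr^{-(|E|+1)/2} = \nr^{-|E|/2 - 1/2}\) via \(2 \operatorname{cr}(D) = |E| + 1\), matching the prefactor in \eqref{eq:state sum}.

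Gauge invariance is then essentially free. Since the identity \eqref{eq:state sum} will have been proved for an arbitrary open diagram \(D\), an arbitrary non-pinched admissible normalized shadow coloring, and an arbitrary log-coloring \(\beta\), while \cite{McPhailSnyderVolume} already shows that the quantity on its side of the identity depends only on \(K\), \(\rho\), and \(\mu\), the right-hand side of \eqref{eq:state sum} is likewise independent of all these auxiliary choices --- in particular of the shadow coloring and the log-coloring, hence it is unchanged by type (A) and type (B) gauge transformations in the sense of \cref{def:gauge transformation} and by integer reindexing of \(\beta\). The main obstacle throughout is the orientation and rotation bookkeeping: the precise placement of the quantum-dilogarithm arguments and of the affine corrections in \eqref{eq:crossing qlog}--\eqref{eq:crossing function} is forced by the requirement that they reproduce the \(R\)-matrix entries of \cite{McPhailSnyderVolume}, and verifying that they do so for every crossing type and orientation is where the real work lies.
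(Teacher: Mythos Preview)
Your proposal is correct and follows essentially the same route as the paper: both start from the state-sum of \cite[Section 6]{McPhailSnyderVolume}, identify the crossing tensor coefficients with \(\frac{1}{\nr}\exp\crossingfunction{\epsilon}{\mu}{\cdots}\), absorb the region-parameter factors \(\alpha_j\) (the paper phrases this as a change of basis deferred to a future version, you as a telescoping cancellation---these are the same mechanism), use the quasi-periodicity \eqref{eq:pfl recurrence} and the rotation rules of \cite[Figure 13]{McPhailSnyderVolume} to match the index shifts, and read off gauge invariance from \cite[Theorem 2]{McPhailSnyderVolume}. The paper's proof is itself a sketch that defers the detailed change-of-basis computation, so your outline is at a comparable level of completeness.
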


\begin{lemma}
  \label{thm:action lemmas}
  Let \(\beta\) be a log-coloring of a normalized, non-pinched shadow coloring.
  \begin{thmenum}
  \item
    \label{thm:action lemma:lattice periodic}
    \(t \mapsto \exp \nr \actq{D, \mu}{t}\) is a
    \(\mathbb{Z}^{E}\)-periodic function on the lattice
    \(
      \frac{\beta}{\nr} + \frac{1}{\nr}\mathbb{Z}^{E}
      .
    \)
  \item
    \label{thm:action lemma:no poles}
    If the shadow coloring avoids the unit circle, then none of the poles
    of the  function \(\mathbb{C}^{E} \to \mathbb{C}\) defined by
    \[
      t \mapsto \exp \nr \actq{D, \mu}{\beta/\nr + t}
    \]
    occur for \(t \in \mathbb{R}^{E}\).
    \qedhere
  \end{thmenum}

\end{lemma}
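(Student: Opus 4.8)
The plan is to reduce both statements to local claims about the single-crossing functions $\crossingfunction{\epsilon}{\mu}{\cdot}$, since $\actq{D,\mu}{}$ is by construction a sum of these over the crossings of $D$ (plus the identifications coming from the parameter shifts). For part (a), I would use the recurrence \eqref{eq:pfl recurrence}: shifting a single variable $t_i$ by $1/\nr$ changes each $\pfl{}$-term in which $t_i$ appears by a factor of the form $(1-e^{\tu(\cdots)})^{\pm 1}$, and the linear terms in \eqref{eq:crossing function} contribute explicit powers of $e^{\tu(\cdots)}$. The point is that after multiplying by $\nr$ and exponentiating, a shift $t_i \mapsto t_i + 1/\nr$ along the lattice $\frac{\beta}{\nr}+\frac{1}{\nr}\mathbb{Z}^E$ multiplies $e^{\nr\actq{D,\mu}{}}$ by a product of such factors, one from each crossing incident to segment $i$. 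First I would check that at a single crossing the shift of one of the four incident variables produces a ratio that is a ratio of the $a$-type and $b$-type parameters — essentially the content of the quasi-periodicity relations alluded to in the proof of \cref{thm:state sum} — and then verify that when one assembles the contributions of all crossings around segment $i$, the net factor is exactly $1$ because the two crossings at the ends of segment $i$ contribute reciprocal factors. This is where the segment equations (\cref{def:segment equations}, \cref{thm:segment solutions are reps}) enter: they are precisely the statement that the two parameters attached to each segment from its two ends agree, which is what forces the telescoping cancellation. So part (a) amounts to: write out the crossing contribution under a unit shift, recognize it in terms of $z_k$, $a_i$, and $b_i$, and invoke the segment equations to cancel.

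For part (b), the poles and zeros of $e^{\pfl{t}}$ occur only at $t \in \frac{1}{\nr}\mathbb{Z}$ with $t\notin(0,1)$, as recorded after \eqref{eq:pfl recurrence}. So $\exp\nr\actq{D,\mu}{\beta/\nr + t}$ can only have a pole when some argument of some $\pfl{}$-term lies in $\frac{1}{\nr}\mathbb{Z}$. Each such argument is an integer combination of the $\beta_i/\nr + t_i$ (together with constant terms in $\frac{1}{\nr}\mathbb{Z}$ and a term $\pm\mu/\nr$). For $t\in\mathbb{R}^E$ the real part of such an argument is a real linear combination of the $t_i$ plus $\tfrac{1}{\nr}\Re\beta_i$-terms, and the imaginary part is the corresponding combination of $\Im\beta_i$ together with $\pm\tfrac{1}{\nr}\Im\mu$. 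The strategy is to show that the imaginary part can never vanish: if it did, the argument would be a real number, hence (being in $\frac{1}{\nr}\mathbb{Z}$) we would conclude the corresponding difference $\beta_{j'} - \beta_j$ (or the appropriate combination) has an integer-multiple-of-$\tfrac{1}{\nr}$ imaginary part matching that of $\pm\mu/\nr$, which translates via $e^{\tu\beta_i}=b_i$ into the statement that a ratio of segment parameters $b_{j'}/b_j$ — i.e.\ essentially a shape parameter $z_k$ up to a power of $m$ — has modulus $1$. That is exactly what "avoids the unit circle" (\cref{thm:avoid unit circle}) rules out. So I would: (i) enumerate the four argument-types appearing in $\lambda^{\pm}_\nr$ from \eqref{eq:crossing qlog}; (ii) for each, match it against one of the shape parameters $z_{\lN}, z_{\lS}, z_{\lW}, z_{\lE}$ via \eqref{eq:shape parameters}; (iii) observe that the pole/zero condition $t_i$-argument $\in\frac1\nr\mathbb Z$ with the argument real forces $|z_k|=1$ for the corresponding corner; (iv) conclude no such $t\in\mathbb{R}^E$ exists.

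The main obstacle I expect is bookkeeping in part (b): correctly matching the four $\pfl{}$-arguments in each of $\lambda^+_\nr$ and $\lambda^-_\nr$ to the four shape parameters $z_{\lN}, z_{\lW}, z_{\lS}, z_{\lE}$, keeping track of the constant shifts ($1 - 1/\nr$, $\pm\mu/\nr$) and the signs, and making sure that the combination appearing in the exponent's imaginary part is exactly $\log|z_k|$ (up to the harmless factor $|m^{\epsilon}|$, which is also controlled since it equals $e^{-2\pi\Im\mu}$ and does not affect the modulus-$1$ dichotomy in the way the argument needs — one should double-check this point carefully, as it is the one place the $\mu$-dependence could in principle spoil the conclusion). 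Part (a), by contrast, is a finite formal computation once the single-crossing shift is understood, modulo the change-of-basis manipulations already deferred to a revision of \cite{McPhailSnyderVolume}; I would present it by first doing one representative variable at one crossing and then remarking that the global cancellation is the segment equation.
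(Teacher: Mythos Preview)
Your overall strategy for both parts matches the paper's: reduce to a single crossing, identify the relevant factors with shape parameters, and for (a) invoke the segment equations to cancel; for (b) invoke the unit-circle hypothesis to keep the imaginary parts nonzero.

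There is, however, a real confusion in your plan for (a). The claim is \(\mathbb{Z}^{E}\)-periodicity, so the shift you must analyze is \(t_{i} \mapsto t_{i} + 1\), not \(t_{i} \mapsto t_{i} + 1/\nr\). A single \(1/\nr\)-step via \eqref{eq:pfl recurrence} produces factors like \((1 - e^{\tu(t_{i}-t_{j}+c)})^{\pm 1}\) that depend on the lattice point itself, so they cannot be identified with the \(a_{i}\)-parameters and will not cancel crossing-against-crossing. What the paper actually uses is the telescoped identity
\[
  e^{\pfl{\zeta/\nr + 1}}
  =
  \frac{e^{\pfl{\zeta/\nr}}}{\prod_{k=1}^{\nr}\bigl(1 - e^{\tu(\zeta+k)/\nr}\bigr)}
  =
  \frac{e^{\pfl{\zeta/\nr}}}{1 - e^{\tu \zeta}},
\]
so that shifting an entry of \(\beta/\nr\) by \(1\) in \(\crossingfunction{\epsilon}{\mu}{\cdot}\) yields factors of the form \((1 - z_{k})^{\pm 1}\) built from the shape parameters \eqref{eq:shape parameters}. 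Only then do the incoming/outgoing contributions at the two ends of a segment become \(a_{i}\) and \(a_{i}^{-1}\), and only then does the segment-equation cancellation you describe actually occur. Your plan is salvageable once you replace the \(1/\nr\)-shift with the full shift by \(1\) and insert this product identity; without it the argument does not close.

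For (b) your approach is exactly the paper's, and your worry about the \(\mu\)-dependence is unfounded: the two \(\pfl{}\)-arguments carrying \(\pm\mu/\nr\) correspond precisely to the corners \(\lW\) and \(\lE\), where the \(m^{\pm 1}\) is already built into the definition of \(z_{\lW}\) and \(z_{\lE}\) in \eqref{eq:shape parameters}. So ``avoids the unit circle'' controls all four arguments uniformly and no separate treatment of \(\mu\) is needed.
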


\begin{proof}
  \ref{thm:action lemma:lattice periodic}
  \Cref{eq:pfl recurrence} gives the quasi-periodicity relation
  \[
    e^{\pfl{\zeta/\nr + 1}}
    =
    \frac{
      e^{\pfl{\zeta/\nr}}
    }{
      (1 - e^{\tu (\zeta + 1)/\nr})
      \cdots
      (1 - e^{\tu (\zeta + \nr)/\nr})
    }
    =
    \frac{
      e^{\pfl{\zeta/\nr}}
    }{
      1 - e^{\tu \zeta}
    }
  \]
  so
  \begin{align*}
    \exp
    \crossingfunction{+}{\mu}{
      \frac{\beta_{1}}{\nr} + 1,
      \frac{\beta_{2}}{\nr},
      \frac{\beta_{1'}}{\nr},
      \frac{\beta_{2'}}{\nr}
    }
    &=
    \exp
    \crossingfunction{+}{\mu}{
      \frac{\beta_{1}}{\nr},
      \frac{\beta_{2}}{\nr},
      \frac{\beta_{1'}}{\nr},
      \frac{\beta_{2'}}{\nr}
    }
    \frac{
      (1- b_{2'}/b_{1})
    }{
      m(1- b_{2}/m b_{1})
    }
    \\
    &=
    a_{1}
    \exp
    \crossingfunction{+}{\mu}{
      \frac{\beta_{1}}{\nr},
      \frac{\beta_{2}}{\nr},
      \frac{\beta_{1'}}{\nr},
      \frac{\beta_{2'}}{\nr}
    }
  \end{align*}
  where \(a_{1}\) is the parameter assigned to the segment by
  \cref{eq:segment a vars}.
  Similarly
  \[
    \exp
    \crossingfunction{+}{\mu}{
      \frac{\beta_{1}}{\nr},
      \frac{\beta_{2}}{\nr},
      \frac{\beta_{1'}}{\nr} + 1,
      \frac{\beta_{2'}}{\nr}
    }
    =
    a_{1'}^{-1}
    \exp
    \crossingfunction{+}{\mu}{
      \frac{\beta_{1}}{\nr},
      \frac{\beta_{2}}{\nr},
      \frac{\beta_{1'}}{\nr},
      \frac{\beta_{2'}}{\nr}
    }
  \]
  and more generally the incoming segment variables have
  quasi-periodicity constant \(a_{i}\) and the outgoing have \(a_{i}^{-1}\).
  When the crossing functions \(\crossingfunction{\epsilon}{\mu}{}\)
  are assembled together into \(\actq{D,\mu}{}\) these factors
  cancel: the \(a_{i}\) match because the \(b_{i} = e^{\tu
  \beta_{i}}\) are a solution of the gluing equations.

  \ref{thm:action lemma:no poles}
  Recall that the zeros and poles of \(e^{\pfl{z}}\) all occur when
  \(z \in \frac{1}{\nr} \mathbb{Z}\).
  The arguments of the quantum dilogarithms \(\pfl{}\) in
  \cref{eq:crossing positive,eq:crossing negative}  are of the form
  \(\zeta/\nr + t_{i} -
  t_{j}\) with \(e^{\tu \zeta}\) the shape parameter of the relevant
  tetrahedron.
  Because \((\rho, u)\) avoids the unit circle \(\Im \zeta \ne 0\)
  for each tetrahedron so we do not encounter any singularities when
  \(t \in \mathbb{R}^{E}\).
\end{proof}

\subsection{State integrals}%
\label{sec:State integrals}

\begin{definition}
  \label{def:state integral}
  The \defemph{state integrals} associated to a log-coloring \(\beta\) are
  \begin{equation}
    \label{eq:state integral again}
    \stint{D, \mu, \beta}{k} \defeq
    \int_{[0,1]^{E}}
    \exp \nr \left( \actq{D, \mu}{t + \beta/\nr} - \tu k \cdot t \right)
    \dif t
  \end{equation}
  for each integer vector \(k \in \mathbb{Z}^{E}\).
\end{definition}

\begin{bigtheorem}
  \label{main result}
  Under the hypotheses above \(\qinv{}\) is given by the sum
  \begin{equation}
    \label{eq:state intgral sum}
    \qinv{K, \rho, \mu}
    =
    \nr^{|E|/2 - 1/2}
    \sum_{k \in \mathbb{Z}^{E}}
    \stint{D, \mu, \beta}{k}
  \end{equation}
  over all state integrals.
\end{bigtheorem}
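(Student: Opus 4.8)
The plan is to read the state sum \eqref{eq:state sum} of \cref{thm:state sum} as a discrete average over a torus of the $\mathbb{Z}^{E}$-periodic function
  \[
    \Psi(t) \defeq \exp \nr \actq{D, \mu}{\beta/\nr + t},
  \]
  to recognize each state integral \eqref{eq:state integral again} as a Fourier coefficient of $\Psi$, and to match the two using the orthogonality relations of the finite group $(\tfrac{1}{\nr}\mathbb{Z}/\mathbb{Z})^{E}$ of $\nr$-torsion points. First I would pin down the gauge: since $D$ is not $\rho$-pinched, by \cref{thm:avoid unit circle} we may take the normalized shadow coloring $(\rho, u)$ to avoid the unit circle, which is exactly the condition making the contour $[0,1]^{E}$ in \eqref{eq:state integral again} miss the poles of $e^{\nr \actq{D, \mu}{}}$; by the gauge-invariance in \cref{thm:state sum} the left side of \eqref{eq:state intgral sum} is unaffected. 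With this choice in force \cref{thm:action lemmas} supplies the two properties of $\Psi$ I need: it is $\mathbb{Z}^{E}$-periodic (the per-segment quasi-periodicity constants $a_{i}$ and $a_{i}^{-1}$ contributed by the two ends of each internal segment cancel because the $b_{i} = e^{\tu\beta_{i}}$ solve the gluing equations), and, $e^{\nr \actq{D, \mu}{}}$ being meromorphic with no poles on $\mathbb{R}^{E}$, the function $\Psi$ is holomorphic in a neighborhood of the compact torus $(\mathbb{R}/\mathbb{Z})^{E}$.

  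Next I would carry out the Fourier computation. Expanding $\Psi(t) = \sum_{m \in \mathbb{Z}^{E}} \widehat{\Psi}(m)\, e^{\tu m \cdot t}$, holomorphicity near the torus makes the coefficients $\widehat{\Psi}(m) = \int_{[0,1]^{E}} \Psi(t)\, e^{-\tu m \cdot t}\dif t$ decay exponentially in $|m|$, so this series and all the sums below converge absolutely and the infinite sum in \eqref{eq:state intgral sum} needs no separate justification. Writing the integrand of \eqref{eq:state integral again} as $\exp \nr \left( \actq{D, \mu}{t + \beta/\nr} - \tu k \cdot t \right) = \Psi(t)\, e^{-\tu (\nr k) \cdot t}$ shows $\stint{D, \mu, \beta}{k} = \widehat{\Psi}(\nr k)$ for every $k \in \mathbb{Z}^{E}$. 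I would then substitute the expansion of $\Psi$ into the sum $\sum_{n \in \nset{\nr}^{E}} \Psi(n/\nr)$ appearing in \eqref{eq:state sum} and apply, in each coordinate, the identity $\sum_{j=0}^{\nr-1} e^{2\pi\ii m j/\nr} = \nr$ when $\nr \mid m$ and $0$ otherwise; this collapses the double sum to $\nr^{|E|}\sum_{k \in \mathbb{Z}^{E}} \widehat{\Psi}(\nr k) = \nr^{|E|}\sum_{k \in \mathbb{Z}^{E}} \stint{D, \mu, \beta}{k}$. Since $\Psi(n/\nr) = \exp\nr\actq{D}{(\beta + n)/\nr}$, feeding this back into \eqref{eq:state sum} yields
  \begin{align*}
    \qinv{K, \rho, \mu}
    &= \frac{1}{\nr^{|E|/2 + 1/2}}\sum_{n \in \nset{\nr}^{E}}\Psi(n/\nr)
    = \frac{\nr^{|E|}}{\nr^{|E|/2 + 1/2}}\sum_{k \in \mathbb{Z}^{E}} \stint{D, \mu, \beta}{k} \\
    &= \nr^{|E|/2 - 1/2}\sum_{k \in \mathbb{Z}^{E}} \stint{D, \mu, \beta}{k},
  \end{align*}
  which is \eqref{eq:state intgral sum}.

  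I do not expect a real obstacle here: all of the analytic substance sits in the two properties of $\Psi$, and once they are in hand the rest is the elementary Fourier analysis of a real-analytic function on a torus, with the exponential decay of $\widehat{\Psi}$ making every interchange of summations legitimate and the convergence of $\sum_{k}\stint{D, \mu, \beta}{k}$ automatic. The $\mathbb{Z}^{E}$-periodicity is pure bookkeeping with the segment equations---it is essentially the content of \cref{thm:action lemma:lattice periodic}, and I would note that its proof in fact establishes periodicity on all of $\mathbb{R}^{E}$, not merely on the lattice $\beta/\nr + \tfrac{1}{\nr}\mathbb{Z}^{E}$. The pole-freeness on $\mathbb{R}^{E}$---equivalently, that the contour really does avoid the singular locus of $e^{\nr \actq{D, \mu}{}}$---is the one delicate ingredient, and it is precisely here that the non-pinched (nondegeneracy) hypothesis enters, through \cref{thm:avoid unit circle} and \cref{thm:action lemma:no poles}; this is the step that breaks down for pinched diagrams. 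So the only thing needing genuine care is setting up that gauge reduction and invoking those two results correctly.
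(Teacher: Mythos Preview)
Your overall strategy---recognize the state sum as a lattice average, identify the state integrals as Fourier coefficients, and collapse via orthogonality---is exactly the paper's. The gap is your periodicity claim. You assert that the proof of \cref{thm:action lemmas}\ref{thm:action lemma:lattice periodic} actually gives $\mathbb{Z}^{E}$-periodicity of $\Psi$ on all of $\mathbb{R}^{E}$, but it does not. The quasi-periodicity identity for $\pfl{}$ is $e^{\pfl{w+1}} = e^{\pfl{w}}/(1 - e^{\tu \nr w})$, so when you shift a variable in $\actq{D,\mu}{t + \beta/\nr}$ by $1$ the resulting factor from a dilogarithm with argument $(t_{j} - t_{i}) + (\beta_{j} - \beta_{i})/\nr$ is $1 - e^{\tu \nr(t_{j}-t_{i})}\,b_{j}/b_{i}$, not $1 - b_{j}/b_{i}$. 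The extra phase $e^{\tu \nr(t_{j}-t_{i})}$ vanishes precisely when $t$ lies in $\tfrac{1}{\nr}\mathbb{Z}^{E}$, which is why the lemma is stated only on that lattice. Off the lattice the segment-equation cancellation you invoke does not occur, so $\Psi$ is \emph{not} a function on the torus, and your ``holomorphic near the torus $\Rightarrow$ exponential Fourier decay $\Rightarrow$ absolute convergence'' chain collapses.

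The paper repairs this by never claiming global periodicity. It takes $f \defeq \Psi|_{[0,1)^{E}}$, extends it $\mathbb{Z}^{E}$-periodically (accepting possible discontinuities along the boundary faces), observes that this extension has bounded variation, and uses pointwise Fourier convergence at points of continuity. The lattice-only periodicity of \cref{thm:action lemmas}\ref{thm:action lemma:lattice periodic} is exactly what ensures the extended $f$ is continuous at the sampling points $n/\nr$, so the Fourier series can be evaluated there. You should replace your holomorphicity argument with this bounded-variation argument; the rest of your computation (matching $\stint{D,\mu,\beta}{k}$ with $\widehat f(\nr k)$ and summing) then goes through unchanged.
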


\begin{proof}
  Define a function  \(F \colon \mathbb{R}^{E} \to \mathbb{C}\) by \(F(t)
  \defeq \exp (\nr \actq{D, \mu}{t + \beta/\nr})\).
  It restricts to a not-necessarily continuous, periodic function \(f
  \colon \mathbb{C}^{E}/\mathbb{Z}^{E} \to \mathbb{C}\).
  By \cref{thm:action lemmas}\ref{thm:action lemma:no poles} \(F\) is
  smooth and in particular
  \begin{equation}
    \label{eq:f-is-BV}
    \int_{[0,1]^{E}} \left| \frac{\partial F(t)}{ \partial t_{i}} \right| \dif t
    =
    \int_{\mathbb{C}^{E}/\mathbb{Z}^{E}} \left| \frac{\partial f(t)}{
    \partial t_{i}}
    \right| \dif t < \infty
    \text{ for all } i \in E.
  \end{equation}
  \(f\) has Fourier coefficients
  \(
    a_{k}
    \defeq
    \int_{[0,1]^{E}} f(t) e^{- \tu k \cdot t} \dif t
  \)
  and our claim is that
  \[
    \qinv{K, \rho, \mu}
    =
    \nr^{|E|/2 - 1/2}
    \sum_{k \in (\nr \mathbb{Z})^{E}}
    a_{k}
    .
  \]
  \Cref{eq:f-is-BV} says that \(f\) has bounded variation, so its Fourier series
  \(
    \sum_{k \in \mathbb{Z}^{E}} a_{k} e^{\tu k \cdot t}
  \)
  converges everywhere and converges to \(f(t)\) when \(t\) is a
  point of continuity for \(f\).
  By \cref{thm:action lemmas}\ref{thm:action lemma:lattice periodic}
  \(f\) is continuous on the lattice
  \(\left(\frac{1}{\nr}\mathbb{Z}\right)^{E}\), so when \(n \in \mathbb{Z}^{E}\)
  \[
    f(n/\nr)
    =
    \sum_{k \in \mathbb{Z}^{E}} a_{k} e^{\tu k \cdot n/\nr}.
  \]
  Plugging this into the state sum \eqref{eq:state sum} gives
  \[
    \qinv{K, \rho, \mu}
    =
    \frac{
      1
    }{
      \nr^{|E|/2 + 1/2}
    }
    \sum_{n \in [\nr]^{E}}
    f(n/\nr)
    =
    \frac{
      1
    }{
      \nr^{|E|/2 + 1/2}
    }
    \sum_{k \in \mathbb{Z}^{E}}
    a_{k}
    \sum_{n \in [\nr]^{E}}
    e^{\tu k \cdot n/\nr}
  \]
  As
  \(
    \sum_{n \in [\nr]^{E}}
    e^{\tu k \cdot n/\nr}
  \)
  is \(\nr^{|E|}\) if every component of \(k\) is divisible by
  \(\nr\) and \(0\) otherwise we conclude
  \[
    \qinv{K, \rho, \mu}
    =
    \nr^{|E|/2 - 1/2}
    \sum_{k \in (\nr \mathbb{Z})^{E}} a_{k}
    .
    \qedhere
  \]
\end{proof}

\section{Formal asymptotics}%
\label{sec:Formal asymptotics}

We conclude by formally studying the large \(\nr\) asymptotics of the
state integrals and discussing barriers to rigorously establishing them.

\subsection{Classical dilogarithms}%
\label{sec:Classical dilogarithms}

The computation of Chern-Simons invariants is closely related to a
special function called the \defemph{dilogarithm} defined by
\[
  \operatorname{Li}_{2}(z)
  \defeq
  - \int_{0}^{z} \frac{\log(1 - t)}{t} \, d t
\]
We refer to \textcite{Zagier2007} for more information on the dilogarithm.
It has a branch point at \(1\) and crossing the branch cut picks up a
\(\log(z)\) term, which itself has a branch point at \(0\).
Analytic continuation of \(e^{\operatorname{Li}_{2}(z)}\) leads to
the flattenings of \citeauthor{Neumann2004} \cite{Neumann2004}.
To avoid dealing with these in full generality, write \(X = (-\infty,
0] \cup [1, \infty)\) and consider \cite[Chapter 4]{Murakami2018} the
holomorphic function \(\dll{} \colon \mathbb{C} \setminus X \to
\mathbb{C}\) defined by
\begin{equation}
  \dll{\zeta}
  \defeq
  \begin{cases}
    \displaystyle
    \frac{
      \operatorname{Li}_{2}(e^{\tu \zeta})
    }{
      \tu
    }
    &
    \Im \zeta > 0
    \\
    \displaystyle
    -
    \frac{
      \operatorname{Li}_{2}(e^{-\tu \zeta})
    }{
      \tu
    }
    - \pi \ii \zeta(\zeta -1)
    - \frac{\pi \ii}{6}
    &
    \Im \zeta < 0
  \end{cases}
\end{equation}

\begin{lemma}
  \label{thm:dilog lemma}
  For fixed \(\zeta \in \mathbb{C} \setminus X \),
  \[
    \exp \frac{ \mathrm{d} \dll{\zeta}}{\mathrm{d} \zeta}
    =
    \frac{
      1
    }{
      1 - e^{\tu \zeta}
    }
  \]
  and
  \[
    e^{\pfl{\zeta}}
    =
    \exp\leftfun[
      \frac{
        \dll{\zeta + 1/2\nr}
      }{\nr}
      +
      \bigoh(\nr^{-1})
    \rightfun]
    \qedhere
  \]
\end{lemma}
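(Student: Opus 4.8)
The plan is to treat the two assertions separately; the first is a short differentiation and the second reduces to the semiclassical behaviour of Faddeev's quantum dilogarithm. For the derivative identity I would differentiate the two-branch definition of $\dll$ directly. On $\set{\Im\zeta>0}$, where $e^{\tu\zeta}$ lies in the open unit disc, the chain rule together with $\tfrac{\mathrm d}{\mathrm dw}\operatorname{Li}_2(w)=-\log(1-w)/w$ gives $\tfrac{\mathrm d\dll{\zeta}}{\mathrm d\zeta}=-\log(1-e^{\tu\zeta})$ (principal branch), so $\exp\tfrac{\mathrm d\dll{\zeta}}{\mathrm d\zeta}=(1-e^{\tu\zeta})^{-1}$. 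On $\set{\Im\zeta<0}$ the same computation applied to $-\operatorname{Li}_2(e^{-\tu\zeta})/\tu$ gives $-\log(1-e^{-\tu\zeta})$, while differentiating the correction $-\pi\ii\zeta(\zeta-1)$ contributes $-\pi\ii(2\zeta-1)$; exponentiating and using $e^{-\pi\ii}=-1$ to rewrite $e^{-\pi\ii(2\zeta-1)}(1-e^{-\tu\zeta})^{-1}=-e^{-\tu\zeta}(1-e^{-\tu\zeta})^{-1}=(1-e^{\tu\zeta})^{-1}$ gives the same value. (That $\dll$ is a single holomorphic function on $\mathbb C\setminus X$, so that its derivative is unambiguous, is part of its construction, cf.\ \cite[Chapter 4]{Murakami2018}; alternatively one checks that the two formulas and their derivatives agree on $(0,1)$.)

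For the asymptotic, recall $e^{\pfl\zeta}=\Phi_{\sqrt\nr}\bigl(\ii\sqrt\nr\,\zeta-c_{\sqrt\nr}+\ii/\sqrt\nr\bigr)$. Since the integrand defining $\Phi_{\mathsf b}$ is symmetric under $\mathsf b\leftrightarrow1/\mathsf b$, evaluating at $\mathsf b=\sqrt\nr$ is the $\mathsf b\to0$ regime with $\mathsf b=1/\sqrt\nr$, where one has the standard expansion $\log\Phi_{\mathsf b}(z)=\tfrac1{2\pi\ii\mathsf b^2}\operatorname{Li}_2(-e^{2\pi\mathsf b z})+\bigoh(\mathsf b^2)$ (equivalently, substitute $w=v/\sqrt\nr$ in the integral and expand $1/\sinh(v/\nr)=\nr/v+\bigoh(\nr^{-1})$, the leading term being $\nr$ times the classical dilogarithm integral). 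Plugging in $\mathsf b=1/\sqrt\nr$ and $z=\ii\sqrt\nr\,\zeta-c_{\sqrt\nr}+\ii/\sqrt\nr$, and simplifying with $c_{\sqrt\nr}=\tfrac\ii2(\sqrt\nr+1/\sqrt\nr)$, gives $-e^{2\pi\mathsf b z}=e^{\tu(\zeta+1/2\nr)}$, the shift $+1/2\nr$ coming precisely from the $+\ii/\sqrt\nr$; hence $\pfl\zeta=\tfrac{\nr}{\tu}\operatorname{Li}_2\bigl(e^{\tu(\zeta+1/2\nr)}\bigr)+\bigoh(\nr^{-1})$, with the dilogarithm continued along the natural path. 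For $\Im\zeta>0$ this is $\nr\,\dll{\zeta+1/2\nr}$ outright. For $\Im\zeta<0$ the argument has left the unit disc, and continuing through the branch cut and applying Euler's inversion formula reproduces exactly the second-branch expression defining $\dll$ (the correction $-\pi\ii\zeta(\zeta-1)-\pi\ii/6$ being the $\tfrac1{\tu}$-rescaled right-hand side of that formula), so again $\pfl\zeta=\nr\,\dll{\zeta+1/2\nr}+\bigoh(\nr^{-1})$; one can instead obtain this step cleanly from the functional equation $\Phi_{\mathsf b}(z)\Phi_{\mathsf b}(-z)=\zeta_{\mathsf b}e^{\pi\ii z^2}$ (explicit $\zeta_{\mathsf b}$), whose second factor has argument $\ii\sqrt\nr(1-1/\nr-\zeta)-c_{\sqrt\nr}+\ii/\sqrt\nr$ with $\Im(1-1/\nr-\zeta)>0$. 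Exponentiating gives the statement.

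I expect the only genuine difficulty to be the analytic justification of the $\mathsf b\to0$ expansion, should one prefer not to quote it: the rescaled contour must be kept off the zeros of $\sinh$ and rotated according to $\Re\zeta$ so that $e^{2v\zeta}$ decays along it, one needs a dominating function independent of $\nr$ for the remainder left after extracting the $\nr/v$ term, and one must handle the meromorphic continuation of the integral outside its strip of absolute convergence. Everything else --- the $\mathsf b\leftrightarrow1/\mathsf b$ reduction, tracking the shift $+1/2\nr$, identifying the classical integral with $\operatorname{Li}_2$, and the inversion-formula bookkeeping in the lower half-plane --- is routine, but worth carrying out carefully, since it is exactly what forces the particular correction $-\pi\ii\zeta(\zeta-1)-\pi\ii/6$ in the second branch of $\dll$.
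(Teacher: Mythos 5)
Your proposal is correct, and for the asymptotic statement it takes a genuinely different route from the paper. Your treatment of the derivative identity is exactly the ``elementary computation'' the paper leaves unwritten (including the gluing of the two branches across $(0,1)$, and the bookkeeping $e^{-\pi\ii(2\zeta-1)}(1-e^{-\tu\zeta})^{-1}=(1-e^{\tu\zeta})^{-1}$ is right). For the second claim the paper does no analysis at all: it observes that $e^{\pfl{\zeta}}=\psi_{\nr}(\zeta+1/2\nr)$ in the notation of \cite{Murakami2018} and quotes the known expansion \cite[eq.~(4.3)]{Murakami2018}. You instead re-derive that expansion from Faddeev's integral representation, using the $\mathsf b\leftrightarrow\mathsf b^{-1}$ symmetry to pass to the $\mathsf b\to0$ regime, tracking the $+\ii/\sqrt\nr$ in the argument to produce the shift $+1/2\nr$, and handling $\Im\zeta<0$ via Euler's inversion formula (your identification of the correction $-\pi\ii\zeta(\zeta-1)-\pi\ii/6$ as the $\tfrac1{\tu}$-rescaled inversion formula is exactly right) or via the inversion relation for $\Phi_{\mathsf b}$; your computation of the reflected argument $1-1/\nr-\zeta$ is also correct. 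What each route buys: the citation avoids the analytic work you correctly flag as the only nontrivial part (uniform justification of the semiclassical expansion, contour rotation and domination, continuation outside the strip of convergence), which is precisely the content of the quoted result; your route is self-contained and makes transparent where the $1/2\nr$ shift and the second branch of $\dll{}$ come from, but to be complete you would either quote the standard expansion of $\Phi_{\mathsf b}$, as you suggest, or reproduce a proof of it.

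One discrepancy worth noting: your derivation gives $\pfl{\zeta}=\nr\,\dll{\zeta+1/2\nr}+\bigoh(\nr^{-1})$, whereas the displayed statement has $\dll{\zeta+1/2\nr}/\nr$ in the exponent. Your version is the one consistent with how the lemma is used: the crossing functions enter the action with a prefactor $1/\nr$, and the pointwise convergence of $\actq{D}{}$ to $\act{D}{}$ claimed in \cref{sec:Asymptotics} requires $\tfrac1\nr\pfl{\zeta}\to\dll{\zeta}$. So the displayed formula appears to carry a misplaced factor of $\nr$, and what you prove is the intended statement rather than the literal one.
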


\begin{proof}
  The first claim is an elementary computation.
  In the notation of \textcite{Murakami2018} we have \(
  e^{\pfl{\zeta}} = \psi_{\nr}(\zeta + 1/2\nr) \) so the second claim
  follows from \cite[eq.\ (4.3)]{Murakami2018}.
\end{proof}

\subsection{The classical action}%
\label{sec:The classical action}

Here we define an action \(\act{D}{}\) corresponding to the \(\nr \to
\infty\) limit of \(\actq{D}{}\).
Set
\begin{equation}
  \label{eq:crossing log}
  \begin{aligned}
    \crossingfunctionclass{+}{t_{1}, t_{2}, t_{3}, t_{4}}
    &\defeq
    \dll{t_{4} - t_{1}}
    +
    \dll{t_{2} - t_{3}}
    \\
    &\phantom{\defeq}
    -
    \dll{t_{2} - t_{1}  + 1 }
    -
    \dll{t_{4} - t_{3} }
    \\
    &\phantom{\defeq}
    - \tu(t_{2} - t_{1})
    \\
    \crossingfunctionclass{-}{t_{1}, t_{2}, t_{3}, t_{4}}
    &\defeq
    -
    \dll{t_{1} - t_{4} + 1}
    -
    \dll{t_{3} - t_{2} + 1}
    \\
    &\phantom{\defeq}
    +
    \dll{t_{1} - t_{2}}
    +
    \dll{t_{3} - t_{4} + 1}
    \\
    &\phantom{\defeq}
    + \tu(t_{2} - t_{1})
  \end{aligned}
\end{equation}
Assign complex variables \(t_{i}\) to the segments as before and
construct a function \(\act{D}{t}\) by taking a sum over crossing functions
\begin{align}
  \label{eq:crossing positive class}
  \begin{tikzpicture}[line width=1, baseline=10, scale=1]
    \draw[->] (0,0) node[left] {\(t_{2}\)} \br (1.5,1) node[right] {\(t_{4}\)};
    \draw[white, line width=10] (0,1) node[left] {} \br (1.5,0) node[right] {};
    \draw[->] (0,1) node[left] {\(t_{1}\)} \br (1.5,0) node[right] {\(t_{3}\)};
  \end{tikzpicture}
  \to
  \crossingfunctionclass{+}{t_{1}, t_{2}, t_{3}, t_{4}}
  \\
  \label{eq:crossing negative class}
  \begin{tikzpicture}[line width=1, baseline=10, scale=1]
    \draw[->] (0,1) node[left] {\(t_{1}\)} \br (1.5,0) node[right] {\(t_{3}\)};
    \draw[white, line width=10] (0,0) node[left] {} \br (1.5,1) node[right] {};
    \draw[->] (0,0) node[left] {\(t_{2}\)} \br (1.5,1) node[right] {\(t_{4}\)};
  \end{tikzpicture}
  \to
  \crossingfunctionclass{-}{t_{1}, t_{2}, t_{3}, t_{4}}
\end{align}
and assigning critical points shifts in the variables (which we could
also think of as \(\delta\) functions)
\begin{align}
  &
  \begin{tikzpicture}[line width=1, baseline=10, scale=1]
    \draw[->, looseness = 1.5] (0,0) node[right] {\(t\)} to [out
    =180, in=180] (0,1) node[right] {\(t\)};
  \end{tikzpicture}
  &
  &
  \begin{tikzpicture}[line width=1, baseline=10, scale=1]
    \draw[->, looseness = 1.5] (0,0) node[left] {\(t\)} to [out =0,
    in=0] (0,1) node[left] {\(t\)};
  \end{tikzpicture}
  &
  &
  \begin{tikzpicture}[line width=1, baseline=10, scale=1]
    \draw[<-, looseness = 1.5] (0,0) node[right] {\(t - 1\)} to [out
    =180, in=180] (0,1) node[right] {\(t\)};
  \end{tikzpicture}
  &
  &
  \begin{tikzpicture}[line width=1, baseline=10, scale=1]
    \draw[<-, looseness = 1.5] (0,0) node[left] {\(t + 1\)} to [out
    =0, in=0] (0,1) node[left] {\(t\)};
  \end{tikzpicture}
\end{align}

\begin{definition}
  The construction above defines the \defemph{classical action} as a
  holomorphic function
  \[
    \act{D}{} \colon \Omega_{D} \to \mathbb{C}
  \]
  where
  \[
    \Omega_{D} =
    \set{
      t \in \mathbb{C}^{E}
      \given
      t_{i} - t_{j} \not \in X
      \text{ whenever \(i,j\) meet at a corner of \(D\)}
    }
  \]
\end{definition}

The classical action is a variant of Yokota's \defemph{potential
function} introduced in \cite{arXiv:math/0009165} and used throughout
the Volume Conjecture literature.
We prefer to call it an \emph{action} to match the language used in
physics, since it is essentially the Chern-Simons action if we view
\(\Omega_{D}\) as parametrizing flat \(\mathfrak{sl}_{2}\)
connections on the knot complement.
Our action includes parameter shifts that appear to be related to the
normalization terms \(\Omega_{1}, \Omega_{2}\) studied by
\citeauthor{zbMATH06433029} \cite{zbMATH06433029}.

\begin{definition}
  A \defemph{generalized critical point} \(\gamma \in
  \mathbb{C}^{E}\) of the classical action is one where
  \[
    \exp
    \left.
    \frac{
      \partial \act{D}{}
    }{
      \partial t_{i}
    }
    \right|_{\gamma}
    =
    1
    \text{ for all } i \in E.
  \]
  or equivalently, a point for which there is a \defemph{flattening
  vector} \(k \in \mathbb{Z}^{E}\) with
  \[
    \left.
    \frac{
      \partial \act{D}{}
    }{
      \partial t_{i}
    }
    \right|_{\gamma}
    =
    \tu
    k_{i}
    \text{ for all } i \in E.
  \]
\end{definition}

\begin{proposition}
  \label{thm:crit points are reps}
  Let \(D\) be a diagram of a knot \(K\).
  Generalized critical points of \(\act{D}{}\) are in bijection with
  normalized non-pinched shadow colorings \((\rho,u)\) of \(D\) with
  meridian eigenvalue \(1\).
  If \(\gamma\) is the critical point corresponding to \((\rho, u)\),
  then its critical value is the \(\slg\) Chern-Simons invariant of
  \(\rho\) (which is independent of \(u\)).
  In the notation of \cite{McPhailSnyderVolume},
  \[
    \exp \act{D}{\gamma} = \mathcal{I}^{\psi}(K, \rho, u).\qedhere
  \]
\end{proposition}

\begin{proof}
  \Cref{thm:dilog lemma} shows that generalized critical points are
  equivalent to the segment equations for \(m = 1\), so we can apply
  \cref{thm:segment solutions are reps}.
  The second claim was worked out in detail by
  \citeauthor{zbMATH06370282} \cite{zbMATH06370282}.
  We explain the idea in our notation.
  In \cite{McPhailSnyderVolume} the Chern-Simons invariant
  \(\log \mathcal{I}^{\psi}\) is defined by taking a sum of
  contributions over crossings.
  However, instead of using the crossing functions \eqref{eq:crossing log}
  defined in terms of \(\dll{}\) \cite{McPhailSnyderVolume} uses
  lifted dilogarithms
  \[
    \lambda(\zeta)
    =
    \dll{\zeta} + \zeta \log(1 - e^{\tu \zeta})
  \]
  In general the extra terms \(\zeta \log(1 - e^{\tu \zeta})\) are
  nontrivial: they correspond to the gluing equations.
  At a critical point \(\gamma\) these are satisfied and the extra
  terms vanish modulo \(\mathbb{Z}\).
\end{proof}

\begin{remark}
  \label{rem:flattening vector}
  \textcite{Neumann2004} gave an algebraic method to compute the
  \(\slg\) Chern-Simons invariant of a triangulated manifold.
  The hyperbolic structure on \(M\) is encoded using a solution of
  Thurston's gluing equations as usual, but his method requires an
  additional choice of combinatorial data called a \defemph{flattening}.
  This is a coherent choice of logarithms of the shape parameters
  satisfying a logarithmic form of the gluing equations.
  For an octahedral decomposition the only nontrivial flattening
  equations come from the segment equations and are of the form
  \[
    \pm \mu + \log(1 - z_{j}) -  \log(1 - z_{j'})
    =
    \pm \mu + \log(1 - z_{l}) -  \log(1 - z_{l'})
    + k_{i}
  \]
  for an integer \(k_{i}\) at each segment.
  These are precisely the integers appearing in the flattening vector above.
\end{remark}

\subsection{Asymptotics}%
\label{sec:Asymptotics}

It is immediate from \cref{thm:dilog lemma} that \(\actq{D}{}\)
converges pointwise on \(\Omega_{D}\) to \(\act{D}{}\) as \(\nr \to
\infty\), which suggests we can approximate the state integrals
\(\stint{D, \mu, \beta}{k}\) of \eqref{eq:state integral again} by
using a saddle-point approximation.
\citeauthor{zbMATH06684922} gives a detailed explanation of the
saddle point approximation in the context of the Volume Conjecture in
\cite[Section 3]{zbMATH06684922}, which we summarize here.

Let \(f\) be holomorphic on some domain in  \(\mathbb{C}^{\ell}\)
containing an isolated critical point \(\gamma\).
A contour \(\Gamma \colon [-1,1]^{\ell} \to \mathbb{C}^{\ell}\) with
\(\Gamma(0) = \gamma\) passes through \(\gamma\) in \defemph{saddle
point position} if \(\Re f(\Gamma(t)) < \Re f(\gamma) \) except for
when \(t = 0\) and it satisfies an additional orientation condition.%
\note{
  For example, when \(\ell = 1\) the orientation condition says that
  \(\Gamma\) should go from one side of the mountain pass to the
  other, not double back.
}
The saddle point approximation says that
\[
  \int_{\Gamma} e^{\nr f(t)} \dif t
  =
  \nr^{-\ell/2}
  e^{\nr f(\gamma)}
  [ \tau + \bigoh(\nr^{-1}) ]
\]
for a constant \(\tau\) related to the determinant of the Hessian of
\(f\) at \(\gamma\).
It can be refined to include higher-order terms in the exponent whose
coefficients are related to derivatives of \(f\).
The idea is that near the critical point we can approximate \(e^{\nr
f}\) by a Gaussian.

We can apply this approximation to study the asymptotics of
\(\qinv{K, \rho, u}\) for \(K\) a hyperbolic knot.
Suppose there is some diagram \(D\) where \(\rho\) is not pinched;
as discussed in \cref{sec:Gauge transformations and admissibility} we
can think of this as a nondegeneracy condition on \(\rho\).
If we could show that \cref{step 2} holds then we could replace the
state integrals of \cref{eq:state integral again} with modified integrals
\[
  \stintadj{D}{k} \defeq
  \int_{\Gamma}
  \exp \nr \left( \act{D}{t} - \tu k \cdot t \right)
  \dif t
\]
for some other contour \(\Gamma\).
The dependence on the initial representation \(\rho\) drops out
because \(\beta/\nr \to 0\); this is one explanation for why the
growth rate in \eqref{eq:asymptotics sign positive} is independent of \(\rho\).
Changing the contour \(\Gamma\) is plausible because \(\act{D}{}\) is
a holomorphic function and we can use Stokes' Theorem (i.e.\ a
higher-dimensional Cauchy theorem) but there are other issues
discussed in \cref{sec:Barriers to Step 2}.

For now, suppose that it is valid to write
\begin{equation}
  \qinv{K, \rho, \mu}
  =
  \nr^{|E|/2 - 1/2}
  \sum_{k \in \mathbb{Z}^{E}}
  \stintadj{D}{k}
  +
  \bigoh(e^{\nr(\Re \varsigma_{K} - \delta)})
\end{equation}
for some \(\delta > 0\).
Since we assumed \(\rho\) was not pinched the complete hyperbolic
structure \(\rho_{\hyp}\) is also not pinched.
By \cref{thm:crit points are reps} there is a generalized critical
point \(\gamma\) of \(\act{D}{}\) with flattening vector \(k_{0}\)
corresponding to some normalized shadow coloring of \(\rho_{\hyp}\)
avoiding the unit circle.

We know that \(\act{D}{\gamma} = \varsigma_{K} \) is the \(\slg\)
Chern-Simons invariant of \(\rho_{\hyp}\) normalized as in
\cref{eq:asymptotics sign positive} and that \(\Re \varsigma_{K}\) is
maximal among all generalized critical points.
Suppose that \(\Gamma\) passes through \(\gamma\) in saddle point
position (which is not quite right: see below).
Then the saddle point approximation gives
\[
  \nr^{|E|/2 - 1/2}
  \stintadj{D}{k_{0}}
  =
  \tau
  \nr^{-1/2}
  e^{\nr \varsigma_{K} + \bigoh(\nr^{-1})}
\]
because \(\Gamma\) is an \(|E|\)-dimensional contour.
The terms of the sum with \(k \ne k_{0}\) do not contribute at
highest order to the asymptotics because the saddle point
approximation also requires the imaginary part of the derivative of
\(\act{D}{}\) to vanish: if it does not the growth is suppressed by
rapid oscillations in the phase of the integrand.
The sum over \(k\) in \eqref{eq:state intgral sum} naturally captures
the extra choice of flattening used in the computation of \(\varsigma_{K}\).

As noted above this analysis is incorrect.
The problem is that \(\gamma\) is \emph{never} an isolated critical
point because of gauge symmetry: the gauge transformations of
\cref{def:gauge transformation} show that a given representation
\(\rho\) corresponds to a manifold of complex dimension \(3\) lying in the
space \(\mathbb{C}^{E}\) of segment parameters.
If we restrict to normalized shadow colorings we fix one degree of
freedom, so \(\act{D}{}\) is \emph{constant} in two complex, hence four real
coordinate directions near \(\gamma\). This recovers an overall power
of \(\nr^{2}\), giving

\begin{conjecture}
  \label{our volume conjecture}
  Let \(K\) be a hyperbolic knot.
  Suppose that \(\rho\) is a decorated representation of \(D\) that
  is geometrically nondegenerate in the sense that \(\rho\) is
  non-pinched for some open diagram \(D\) of \(K\).
  Then for some choices of log-meridian \(\mu\) (see
  \cref{sec:Barriers to Step 2}) there is a constant \(\tau\) so that
  \begin{equation*}
    \qinv{K, \rho, \mu}
    =
    \nr^{3/2}
    e^{\nr \varsigma_{K} }
    [\tau + \bigoh(\nr^{-1}) ]
    \qedhere
  \end{equation*}
\end{conjecture}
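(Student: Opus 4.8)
The plan is to begin from the exact state integral presentation of \cref{main result}, which reduces the claim to the large-\(\nr\) asymptotics of \(\nr^{|E|/2 - 1/2}\sum_{k \in \mathbb{Z}^{E}} \stint{D, \mu, \beta}{k}\). First I would pass from the quantum action to the classical one: by \cref{thm:dilog lemma} we have \(\actq{D, \mu}{} \to \act{D}{}\) with an \(\bigoh(\nr^{-1})\) error, and the dependence on \(\beta/\nr\) disappears in the limit, so each \(\stint{D, \mu, \beta}{k}\) should be asymptotic to \(\stintadj{D}{k}\). Making this rigorous requires the convergence \(\actq{D, \mu}{} \to \act{D}{}\), together with its error term, to hold \emph{uniformly} along the integration cycles; this rests on the standard asymptotic expansion of Faddeev's quantum dilogarithm and on bounds controlling it near its essential singularity at infinity. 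This is the analytic input (in the spirit of \cite{zbMATH06684922}) that replaces the discrete estimates usually accompanying \cref{step 1}.

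The core of the argument is \cref{step 2}. For each \(k\) one would exhibit a homotopy from the torus-section cycle \([0,1]^{E}\) to a steepest-descent (Lefschetz thimble) cycle \(\Gamma_{k}\) for the phase \(\act{D}{t} - \tu k\cdot t\), arranged so that \(\Gamma_{k_{0}}\) passes through the generalized critical point \(\gamma\) of \(\act{D}{}\) corresponding, via \cref{thm:crit points are reps}, to a normalized shadow coloring of \(\rho_{\hyp}\) with a flattening vector \(k_{0}\) for which \(\act{D}{\gamma} = \varsigma_{K}\). Because \(e^{\nr \actq{D, \mu}{}}\) is meromorphic, Stokes' theorem identifies the difference of the two cycles with a sum of residues at the poles swept out by the homotopy; the homotopy must therefore avoid these poles (using the non-pinched hypothesis and \cref{thm:action lemmas} continued off \(\mathbb{R}^{E}\)) and must keep \(\Re \act{D}{}\) strictly below \(\Re \varsigma_{K}\) away from \(\gamma\), so that the difference is \(\bigoh(e^{\nr(\Re \varsigma_{K} - \delta)})\). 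One must also show the tail \(\sum_{k \neq k_{0}}\) is subleading: for such \(k\), either \(\act{D}{} - \tu k\cdot t\) has no critical point with real part \(\Re \varsigma_{K}\), or it has one but with nonvanishing imaginary part of the gradient, so the contribution is suppressed by phase oscillation. I expect this contour-deformation problem — a global Morse-theoretic analysis of \(\Re \act{D}{}\) on \(\Omega_{D}\), the attendant thimble decomposition, and its interaction with the poles of the quantum dilogarithm — to be the main obstacle; it is exactly the ``barriers to \cref{step 2}'' discussed in \cref{sec:Barriers to Step 2}.

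Granting \cref{step 2}, the last step accounts for the fact that \(\gamma\) is never an isolated critical point: by \cref{def:gauge transformation} the shadow colorings over a fixed \(\rho\) form a \(3\)-dimensional family, and imposing the normalization condition cuts this to a \(2\)-dimensional critical manifold \(\Sigma \subset \mathbb{C}^{E}\) along which \(\act{D}{t} - \tu k_{0}\cdot t\) is constant, since the Chern-Simons invariant is gauge-invariant. A degenerate (Morse--Bott) saddle-point approximation — Gaussian in the \(|E| - 2\) directions normal to \(\Sigma\) and a finite integral over a compact piece of \(\Sigma\) — then gives
\[
  \nr^{|E|/2 - 1/2}\,\stintadj{D}{k_{0}}
  \sim
  \nr^{|E|/2 - 1/2}\,\nr^{-(|E|-2)/2}\, e^{\nr \varsigma_{K}}\,[\tau + \bigoh(\nr^{-1})]
  =
  \nr^{1/2}\, e^{\nr \varsigma_{K}}\,[\tau + \bigoh(\nr^{-1})],
\]
the claimed rate, with \(\tau\) assembled from the transverse Hessian determinant and the volume of the relevant slice of \(\Sigma\). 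A subtlety to check is that at finite \(\nr\) the quantum action \(\actq{D, \mu}{}\) need not be exactly flat along \(\Sigma\); one must verify its variation there is \(\bigoh(\nr^{-1})\), or work on a fixed gauge slice, so that it is absorbed into the error.

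Finally, the restriction to ``some choices of log-meridian \(\mu\)'' is genuine: \(\mu\) is a logarithm of the meridian eigenvalue \(m = 1\) of \(\rho_{\hyp}\), and distinct choices give distinct flattenings and, through \cref{rem:flattening vector}, distinct candidate critical values. Only those \(\mu\) whose flattening \(k_{0}\) realizes \(\varsigma_{K}\) as a critical value of maximal real part, and for which the steepest-descent cycle genuinely passes through \(\gamma\), will produce the asymptotics \eqref{eq:asymptotics sign positive} — the analogue of the familiar ``sign'' subtleties in volume conjectures, and the reason the statement is phrased for some, rather than all, \(\mu\).
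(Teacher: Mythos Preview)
The statement is a \emph{conjecture}, and the paper does not prove it. What the paper offers is precisely the formal saddle-point heuristic you outline: start from \cref{main result}, replace \(\actq{D,\mu}{}\) by \(\act{D}{}\) via \cref{thm:dilog lemma}, assume \cref{step 2} can be carried out, and then correct the naive saddle-point power count using the two-dimensional gauge critical manifold to arrive at the \(\nr^{1/2}\) prefactor. Your proposal recapitulates this reasoning and correctly flags the contour-deformation and uniform-convergence issues as the genuine obstacles, which is exactly the content of the paper's \cref{sec:Barriers to Step 2}. So as a strategy your proposal matches the paper's, and as a proof it has the same gap the paper explicitly leaves open.

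One small inaccuracy in your final paragraph: you describe \(\mu\) as a logarithm of the meridian eigenvalue \(m = 1\) of \(\rho_{\hyp}\). But \(\mu\) is a log-meridian of the \emph{input} representation \(\rho\), not of \(\rho_{\hyp}\). The paper's caveat ``for some choices of log-meridian \(\mu\)'' is not about selecting the correct flattening for \(\rho_{\hyp}\); rather it reflects possible cancellations in the sum \eqref{eq:state intgral sum}, as in the observed vanishing of \(\qinv{K,\rho,\mu}\) for boundary-parabolic \(\rho\) unless \(2\mu \equiv -1 \pmod{\nr\mathbb{Z}}\).
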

Similar conjectures were made by  \citeauthor{zbMATH05624995}
\cite{zbMATH05624995}.

\subsection{Barriers to moving the contour}%
\label{sec:Barriers to Step 2}

\begin{figure}
  \centering
  \begin{tikzpicture}[line width=1]
    \draw (-2,0) to (2,0);
    \draw (0,-0.5) to (0,1);
    \draw[line width = 2, color=slred] (-2, 0) node[below] {\(X\)} to (0,0);
    \draw[line width = 2, color=slred] (1, 0) to (2, 0);
    \draw[->] (0,0) to (1.5, 1) node[above] {\(\zeta\)};
    \draw[color=slblue] (1.5-1,1) to (2.5, 1) node[right] {\(\zeta + [-1,1]\)};
    \draw[color=slblue] (.75-1,.5) to (1.75,.5) node[right]
    {\(\zeta/2 + [-1,1]\)};
  \end{tikzpicture}
  \caption{
    The arguments of the dilogarithms in the state integrals \(
    \stint{D, \mu, \beta}{k} \) are of the form \(\zeta/\nr + t_{i} -
    t_{j}\) with \(t_{i}, t_{j} \in [0,1]\).
    As \(\nr \to \infty\) the segments over which they vary approach
    the singular region \(X = (-\infty, 0] \cup [1, \infty)\).
  }
  \label{fig:approach singularities}
\end{figure}
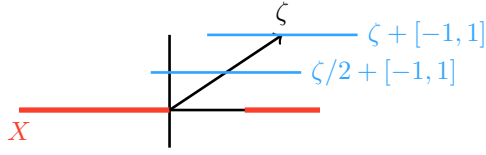

Establishing the volume asymptotics of \cref{our volume conjecture}
requires replacing our original state integrals
\begin{equation*}
  \stint{D, \mu, \beta}{k} \defeq
  \int_{[0,1]^{E}}
  \exp \nr \left( \actq{D, \mu}{t + \beta/\nr} - \tu k \cdot t \right)
  \dif t
\end{equation*}
with shifted ones
\[
  \stintadj{D}{k} \defeq
  \int_{\Gamma}
  \exp \nr \left( \act{D}{t} - \tu k \cdot t \right)
  \dif t.
\]
This is \cref{step 2} for \(\qinv{}\).
The idea is to use Stokes' Theorem to shift the initial contour
\([0,1]^{E}\) to some new contour \(\Gamma\) in saddle position for
the correct critical point.
The obvious problem is that Stokes' Theorem can move only the
interior of the contour, not the boundary.%
\note{%
  If the integrand were periodic the boundary terms would cancel but
  this only holds on a lattice in \(\mathbb{C}^{E}\) as in
  \cref{thm:action lemmas}\ref{thm:action lemma:lattice periodic}.
}
This is a serious issue.
As shown in \cref{fig:approach singularities} the arguments of the
quantum dilogarithms in the action in the original state integral
approach the real axis as \(\nr \to \infty\).
Unfortunately the real axis is precisely where the problems are: as
\(\nr \to \infty\) the singularities of \(e^{\pm \pfl{}}\) become
dense in \(X\).
Near this region \(\actq{D, \mu}{}\)  can fail to uniformly converge
to \(\act{D}{}\), which is related to the fact that the branch points
of the dilogarithm lie there.
While the interior of the contour can be moved away the remaining
boundary will still approach the singular region.
Understanding why the remaining boundary terms cancel or do not
contribute is a hard analytic problem.
There may be subtleties here that obstruct \cref{our volume
conjecture} for certain \((\rho, \mu)\).

One case where this occurs is already known.
When \(\rho\) is boundary-parabolic experimental evidence
\cite[Remark 6.2]{McPhailSnyderVolume} suggests \(\qinv{K, \rho,
\mu}\) vanishes unless  \(2\mu \equiv -1 \pmod {\nr \mathbb{Z}}\).
This suggests there can be nontrivial cancellations in the sum
\eqref{eq:state intgral sum} and/or indicate other barriers to moving
the contour.
Examining these issues may shed light on why it sometimes \emph{is}
possible to move the contour and prove asymptotic growth.

\printbibliography

\end{document}